\theoremstyle{plain}
\theoremstyle{plain}
\newtheorem{theorem}{Theorem}[section]
\newtheorem{lemma}{Lemma}[section]
\newtheorem{definition}{Definition}[section]
\theoremstyle{example}
\newtheorem{example}{Example}[section]
\theoremstyle{openproblem}
\def\T{\CMcal{T}}
\def\R{\CMcal{R}}
\def\A{\mathcal{A}}
\def\B{\mathcal{B}}
\def\S{\CMcal{S}}
\def\inv{\mathsf{inv}}
\def\quinv{\mathsf{quinv}}
\def\rev{\mathsf{rev}}
\def\maj{\mathsf{maj}}
\def\ndes{\mathsf{ndes}}
\def\Des{\mathsf{Des}}
\def\des{\mathsf{des}}
\def\South{\mathsf{South}}
\def\ndes{\mathsf{ndes}}
\def\maxi{\mathsf{max}}
\def\mini{\mathsf{min}}
\theoremstyle{remark}
\newtheorem{remark}{Remark}
\numberwithin{equation}{section}
\numberwithin{figure}{section}
\title[Modified Macdonald polynomials and $\mu$-Mahonian statistics]{Modified Macdonald polynomials and $\mu$-Mahonian statistics}
\author{Emma Yu Jin}
\address{School of Mathematical Sciences, Xiamen University, Xiamen 361005, China}
\email{yjin@xmu.edu.cn}
\author{Xiaowei Lin}
\address{School of Mathematical Sciences, Xiamen University, Xiamen 361005, China}
\email{linxiaoweiqing@126.com}
\subjclass[2020]{Primary: 05E05; Secondary: 33D52}
\keywords{modified Macdonald polynomials, bijections, inversions, queue inversions, the major index, equidistribution}
\begin{document}

\begin{abstract}
The Haglund--Haiman--Loehr theorem provides the following combinatorial formula for the modified Macdonald polynomials:
\begin{align*}
\tilde{H}_{\mu}(X;q,t)=\sum_{\sigma: \mu\rightarrow \mathbb{P}}x^{\sigma}t^{\maj(\sigma)}q^{\inv(\sigma)}.
\end{align*}
Inspired by Martin's multiline-queue formula for the stationary distribution of multitype asymmetric simple exclusion processes, Corteel, Haglund, Mandelshtam, Mason and Williams recently introduced the queue inversion statistic $\quinv$ and conjectured that the tableaux formula for $\tilde{H}_{\mu}(X;q,t)$ is invariant if the inversion statistic $\inv$ is replaced by $\quinv$. This was subsequently resolved by Ayyer, Mandelshtam and Martin, who proposed a stronger conjecture on the equivalence of the two refined formulas for $\tilde{H}_{\mu}(X;q,t)$.

Our main result confirms this Ayyer--Mandelshtam--Martin conjecture. We establish an equidistribution between the pairs $(\inv,\maj)$ and $(\quinv,\maj)$ of $\mu$-Mahonian statistics on any row-equivalency class $[\tau]$, where $\tau$ is a filling of the Young diagram of $\mu$. As a byproduct of our approach, we show that if $\tau$ is a rectangular filling, the triples $(\inv,\quinv,\maj)$ and $(\quinv,\inv,\maj)$ have the same distribution over $[\tau]$.

\end{abstract}
\maketitle

\section{Introduction}
{\em Macdonald polynomials} $P_{\mu}(X;q,t)$ indexed by partitions are polynomials in infinitely many variables $X=\{x_1,x_2,\ldots\}$ with coefficients in the field $\mathbb{Q}(q,t)$ of rational functions of two variables $q$ and $t$. Several important classes of symmetric polynomials are well-studied specializations of Macdonald polynomials such as
Schur polynomials (when $q=t$), Hall--Littlewood polynomials (when $q=0$) and Jack polynomials (when $q=t^{\alpha}$ and let $t\rightarrow 1$); see for example \cite{Mac88,Mac95}.

Macdonald polynomials $P_{\mu}(X;q,t)$ are defined as the unique basis for the ring of symmetric functions over the field $\mathbb{Q}(q,t)$ satisfying certain orthogonality and lower triangularity properties. The orthogonality is defined with respect to the Hall scalar product, and the triangularity refers to the expansion of $P_{\mu}(X;q,t)$ in terms of the monomial symmetric functions $m_{\lambda}(X)$. Since the coefficients in this expansion may have nontrivial denominators, Macdonald introduced the {\em integral form} of $P_{\mu}(X;q,t)$, denoted by $J_{\mu}(X;q,t)$, which is defined as the product of $P_{\mu}(X;q,t)$ and a polynomial determined by the arm- and leg-lengths of the Young diagram of $\mu$ (see \cite[Equation (6.3)]{Mac88} and \cite[Chapter VI, Equation (8.3)]{Mac95}). Equivalently,
\begin{align}\label{E:j1}
J_{\mu}(X;q,t)=\sum_{\lambda}K_{\lambda\mu}(q,t)s_{\lambda}[X(1-t)],
\end{align}
where $f[X]$ denotes the plethystic substitution of $X$ into the symmetric function $f$, $s_{\lambda}$ denotes the Schur function, and $K_{\lambda\mu}(q,t)$ is the $(q,t)$-Kostka number.

Subsequently, another widely studied variant of Macdonald polynomials, called the {\em modified Macdonald polynomials} $\tilde{H}_{\mu}(X;q,t)$ (also known as the {\em transformed Macdonald polynomials}), was introduced by Garsia and Haiman \cite{GH93}. 

Let $\tilde{K}_{\lambda\mu}(q,t)=t^{n(\mu)}K_{\lambda\mu}(q,t^{-1})$, where $n(\mu)=\sum_{i}(i-1)\mu_i$. Then,
\begin{align*}
\tilde{H}_{\mu}(X;q,t)=\sum_{\lambda}\tilde{K}_{\lambda\mu}(q,t)s_{\lambda}(X).
\end{align*}
 Remarkably, Haiman discovered that $\tilde{H}_{\mu}(X;q,t)$ equals the Frobenius series of a space spanned by certain polynomials together with all of their partial derivatives \cite[Equation (57)]{Haiman01}. In parallel, the combinatorial investigation of modified Macdonald polynomials has been greatly advanced by the celebrated breakthrough on the surprising connections between $\tilde{H}_{\mu}(X;q,t)$ and the combinatorial statistics $\maj$ and $\inv$ on fillings of Young diagrams, due to Haglund, Haiman and Loehr \cite{HHL04}. 

A filling of the Young diagram of $\mu$ is a function $\sigma: \mu\rightarrow \mathbb{P}$ (where $\mathbb{P} $ is the set of positive integers) such that each cell is filled with a positive integer. Let $x^{\sigma}$ denote the content of the filling $\sigma$. Then, Haglund, Haiman and Loehr \cite{HHL04} established that 

\begin{align}\label{E:mmp1}
\tilde{H}_{\mu}(X;q,t)=\sum_{\sigma: \mu\rightarrow \mathbb{P}}x^{\sigma}t^{\maj(\sigma)}q^{\inv(\sigma)},
\end{align}
where $\maj$ and $\inv$ are natural extensions of the major index and inversion number for permutations, respectively. The reading word $\omega(\sigma)$ of $\sigma$ is defined as the sequence formed by the entries in the top row, read from left to right, followed by the entries in the next row from left to right, and so on.
When the Young diagram consists of a single row (resp. a single column), we have
$\inv(\sigma)=\inv(\omega(\sigma))$ (resp. $\maj(\sigma)=\maj(\omega(\sigma))$), indicating that the statistic $\inv$ (resp. $\maj$) reduces to the classical Mahonian statistic on permutations of a multiset \cite{Haglund:book,Stanley}. In general, the statistics $\inv$ and $\maj$ defined on the fillings of the Young diagram of $\mu$ are called $\mu$-Mahonian statistics.
Their precise definitions are presented in Section \ref{S:2}. 

Recently, Corteel, Haglund, Mandelshtam, Mason and Williams \cite[Theorem 3.5]{CHO22} proved a compact formula for $\tilde{H}_{\mu}(X;q,t)$, expressed as a sum over sorted tableaux, and they conjectured the following equivalent form of (\ref{E:mmp1}):
\begin{align}\label{E:mmp2}
\tilde{H}_{\mu}(X;q,t)=\sum_{\sigma:\mu\rightarrow \mathbb{P}}x^{\sigma}t^{\maj(\sigma)}q^{\quinv(\sigma)},
\end{align}
where $\quinv$ is called {\em queue inversion}, as defined in Section \ref{S:2}. This statistic was inspired by Martin's multiline-queue formula for the stationary distribution of multitype asymmetric simple exclusion processes \cite[Theorem 3.4]{Martin:20}. This conjecture was confirmed by Ayyer, Mandelshtam and Martin \cite[Equation (2)]{AMM23} who verified that the right-hand side of (\ref{E:mmp2}) satisfies certain orthogonal and triangular conditions that uniquely characterize the modified Macdonald polynomials $\tilde{H}_{\mu}(X;q,t)$. 

Interestingly, a refinement of the equality between (\ref{E:mmp1}) and (\ref{E:mmp2}) was conjectured by Ayyer, Mandelshtam and Martin (Conjecture 10.3 in \cite{AMM23}).
Our main result affirms this conjecture; see (\ref{E:T1}) of Theorem \ref{T:3}. As a byproduct of our approach, we establish the equidistribution (\ref{E:T2}) between the triples $(\inv,\quinv,\maj)$ and $(\quinv,\inv,\maj)$ for all fillings of a rectangular shape $\mu=(n^m)$. Two fillings $\sigma$ and $\tau$ of a given diagram are called {\em row-equivalent}, denoted by $\sigma\sim\tau$, if the multisets of entries in the $i$th row of $\sigma$ and $\tau$ are identical for all $i$. Our main result is the following:
\begin{theorem}\label{T:3}
	 Let $[\sigma]$ denote the row-equivalency class of $\sigma$. Then,
\begin{align}\label{E:T1}
\sum_{\tau\in [\sigma]}t^{\maj(\tau)}q^{\inv(\tau)}=\sum_{\tau\in [\sigma]}t^{\maj(\tau)}q^{\quinv(\tau)}.
\end{align}	
If $\sigma$ is a rectangular filling, then
\begin{align}\label{E:T2}
\sum_{\tau\in [\sigma]}t^{\maj(\tau)}q^{\inv(\tau)}u^{\quinv(\tau)}=\sum_{\tau\in [\sigma]}t^{\maj(\tau)}u^{\inv(\tau)}q^{\quinv(\tau)}.
\end{align}
\end{theorem}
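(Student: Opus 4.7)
The plan is to construct an explicit bijection $\Phi\colon [\sigma]\to [\sigma]$ that preserves $\maj$ and sends $\inv$ to $\quinv$; for the rectangular case I would additionally arrange that $\Phi$ can be upgraded to an involution swapping $\inv$ and $\quinv$, which immediately yields (\ref{E:T2}).

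The starting observation is that each of $\maj$, $\inv$, and $\quinv$ decomposes as a sum of local contributions between consecutive rows of $\tau$: a descent contributing to $\maj$ involves a single pair of vertically adjacent cells, and each $\inv$- or $\quinv$-triple lives entirely in two consecutive rows. Since the row multisets of $\tau$ are fixed throughout $[\sigma]$, I would first reduce the problem to the following two-row statement: given a fixed lower row $R$ and a fixed multiset for the upper row $R'$, construct a bijection on arrangements of $R'$ which preserves the $\maj$-contribution from $(R,R')$ and swaps the local $\inv$- and $\quinv$-contributions. Iterating this construction from the bottom row upwards, and keeping track of how each rearrangement of $R_i$ interacts with the already-chosen arrangement of $R_{i-1}$, then assembles $\Phi$ on all of $[\sigma]$.

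The heart of the argument, and where I expect the main obstacle to lie, is this two-row bijection. One must rearrange the entries of $R'$ subject to a fixed multiset while simultaneously controlling three interacting statistics. My plan is to analyse the triples contributing to $\inv$ and $\quinv$ column by column, tracking for each cell of $R'$ whether the cell directly below creates a descent and whether the cell participates in a left-oriented ($\inv$) or right-oriented ($\quinv$) triple with the rest of $R'$. The bijection would then be built by a Foata-style peeling algorithm that locally exchanges left- and right-oriented patterns, with particular care devoted to ties (repeated entries), where the tie-breaking conventions in $\inv$ and $\quinv$ differ and typically provide the only freedom to redistribute the statistics without disturbing $\maj$.

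Finally, for the rectangular case (\ref{E:T2}) I would verify that the two-row bijection can be promoted to an involution. In a rectangular diagram every row has the same length, so the definitions of $\inv$ and $\quinv$ become conjugate under a natural row-reversal operation (each left-oriented triple corresponds to a right-oriented triple in the reversed arrangement). Composing $\Phi$ with this reversal, twisted so as to preserve $\maj$, should produce an involution $\Psi$ on $[\sigma]$ satisfying $(\maj,\inv,\quinv)(\Psi(\tau))=(\maj,\quinv,\inv)(\tau)$, from which (\ref{E:T2}) follows. The fact that this extra symmetry is available only for rectangular shapes matches exactly the restriction in the statement, since for non-rectangular shapes the row lengths vary and the row-reversal is no longer shape-preserving.
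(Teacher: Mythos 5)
Your high-level goal (a $\maj$-preserving bijection $[\sigma]\to[\sigma]$ carrying $\inv$ to $\quinv$, upgraded to an $\inv\leftrightarrow\quinv$ swap on rectangles) is exactly what the paper constructs, but the reduction you propose to get there is based on a false lemma. You ask for, given a \emph{fixed} arrangement of a lower row $R$ and a fixed multiset for the upper row $R'$, a bijection on arrangements of $R'$ that preserves the $\maj$-contribution of the pair and carries the local $\inv$-count to the local $\quinv$-count. No such bijection can exist in general because the underlying equidistribution already fails for a $2\times 2$ square: fix the bottom row as $(1,2)$ and let the top row range over the arrangements of $\{1,2\}$. For top row $(1,2)$ one finds $\maj=0$, $\inv=1$, $\quinv=2$ (the triple $(1,\infty,2)$ is not an inversion triple, while $(1,1,2)$ and $(0,1,2)$ are both queue inversion triples); for top row $(2,1)$ one finds $\maj=1$, $\inv=0$, $\quinv=0$. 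Thus over upper-row arrangements alone the pairs $(\maj,\inv)$ form the multiset $\{(0,1),(1,0)\}$ while $(\maj,\quinv)$ form $\{(0,2),(1,0)\}$. The identity \eqref{E:T1} only becomes true after the bottom row is also allowed to vary (adding the two fillings with bottom row $(2,1)$ restores equality), so any bottom-up peeling that freezes already-processed rows cannot assemble the desired $\Phi$. This is precisely why the paper's bijection is built from operators acting on pairs of adjacent \emph{columns} across all rows simultaneously (the involutions $\phi_i$ of Theorem \ref{thm3.4} and the flip operators $\rho_i^r$), rather than from independent two-row rearrangements.

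On the rectangular case, your instinct that row reversal conjugates the two statistics is in the right direction --- the paper's bijection $\theta$ does begin with $\sigma^r$ --- but reversal alone does not swap $\inv$ and $\quinv$: Lemma \ref{L:1} shows $\quinv(\sigma)-\inv(\sigma^r)=\sum_i x_i(n-2i+1)$, where $x_i$ is the number of non-descents in column $i$, and this is nonzero in general. The substance of Theorem \ref{thm8} is the correction of this discrepancy by a sorting network of the column-switch involutions $\phi_i$, tracked through the sequence of column non-descent counts; the phrase ``twisted so as to preserve $\maj$'' conceals exactly this work. So both halves of the proposal would need to be rebuilt around column operations before the argument could go through.
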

Note that the symmetric distribution (\ref{E:T2}) does not hold for an arbitrary filling $\sigma$; we provide such an example in Remark \ref{Tab:counterexample}. The proof of Theorem \ref{T:3} makes significant use of two operators: the reverse operator and the flip operator (also called the column-switching operator).

\begin{remark}
Bhattacharya, Ratheesh, and Viswanath (see \cite[Theorem 1]{BRV23} and \cite[Theorems 3.6 and 4.5]{R24}) proved three special cases of (\ref{E:T1}); namely, the case when the entries of $\sigma$ in each column strictly decrease from top to bottom (in the French representation of the Young diagram), the case when $q=0$, and the case when the fillings $\tau$ maximize the number of inversions or queue inversions. 
	
Their proofs are also bijective, developing novel connections between different combinatorial models, maps and statistics such as the Gelfand--Tsetlin patterns, partitions overlaid patterns, box complementation, and the statistics charge and cocharge on words.
\end{remark}

The rest of the paper is organized as follows: in Section \ref{S:2}, we present preliminaries on the $\mu$-Mahonian statistics $\inv$, $\quinv$, and $\maj$ of fillings. In Section \ref{S:2operators}, the reverse and flip operators, which are the key ingredients of our proof, are described. In Section \ref{S:mainproof}, we provide a roadmap for the proof, and Sections \ref{S:5} and \ref{S:6} are devoted to proving Theorem \ref{T:3}.

\section{Preliminaries and notations}\label{S:2}
A partition $\mu=(\mu_1,\cdots,\mu_k)$ of $n$ is a sequence of positive integers such that $\mu_i\ge \mu_{i+1}$ for all $1\le i<k$ and $|\mu|=\mu_1+\cdots+\mu_k=n$. Integer $\mu_i$ is called the $i$th part of $\mu$, and $k$ is called the length of $\mu$, denoted by $\ell(\mu)$. The Young diagram of $\mu$ is an array of boxes with $\mu_i$ boxes in the $i$th row from bottom to top, with the first box of each row left-justified (see Figure \ref{F:1}). We write $\mu=(n^m)$ if $\mu$ is the partition with exactly $m$ parts of size $n$.

A box has coordinates $(i,j)$ if it is in the $i$th row from the bottom and the $j$th column from the left. Let $\mu'$ denote the transpose of $\mu$; that is, the Young diagram of $\mu'$ is obtained from that of $\mu$ by reflecting across the main diagonal (boxes with coordinates $(i,i)$); see Figure \ref{F:1}.
\begin{figure}[H]
\begin{ytableau}[] & \\  & & & \\ & & &   \end{ytableau}
\quad\quad\quad
\begin{ytableau}[] & \\  &   \\ & & \\ & &   \end{ytableau}	
	\caption{\label{F:1} The Young diagrams of $\mu=(4,4,2)$ (left) and its transpose $\mu'=(3,3,2,2)$ (right).}
\end{figure}
A filling of the Young diagram of $\mu$ is a function $\sigma :\mu\rightarrow \mathbb{P}$ (where $\mathbb{P}$ is the set of positive integers), which assigns to each box $z$ of the Young diagram of $\mu$ (denoted by $z\in \mu$) to a positive integer $\sigma(z)$. We use $\South(z)$ to denote the box immediately below $z$. Let $\T(\mu)$ be the set of fillings of the Young diagram of $\mu$, and define
\begin{align*}
x^{\sigma}=\prod_{z\in \mu}x_{\sigma(z)}
\end{align*}
to be the monomial corresponding to the content of $\sigma$. A {\em descent} (resp. {\em non-descent}) of a filling $\sigma \in \T(\mu)$ is a pair of entries $(\sigma(z),\sigma(\South(z))$ such that $\sigma(z)>\sigma(\South(z))$ (resp. $\sigma(z)\le \sigma(\South(z))$).
We define $\Des(\sigma)=\{z\in \mu: \sigma(z)>\sigma(\South(z))\}$ to be the {\em descent set} of $\sigma$, and $\des(\sigma)=|\Des(\sigma)|$.
Let $\mathsf{leg}(z)$ be the number of boxes strictly above box $z$ in its column. Then, 
\begin{align*}
\maj(\sigma)=\sum_{z\in\Des(\sigma)}({\mathsf{leg}}(z)+1)
\end{align*}
is called {\em the major index} of $\sigma$. 
We define $N\des(\sigma)=(a_1,\ldots,a_k)$, where $a_i$ is the number of non-descents in column $i$ of $\sigma$, and set $\ndes(\sigma)=|{N}\des(\sigma)|=a_1+\cdots+a_k=|\{z\in \mu: \sigma(z)\le \sigma(\South(z))\}|$ to be the number of non-descents of $\sigma$. 

Given a filling $\sigma\in \T(\mu)$, let $\sigma(\mu_i'+1,i)=0$ for all $1\le i\le \mu_1$. In other words, we add a box with entry $0$ above the topmost box of each column of $\sigma$. Furthermore, let $\sigma(0,i)=\infty$ for all $1\le i\le \mu_1$. That is, we add a box with entry $\infty$ below the bottommost box of each column of $\sigma$. 

We define $\CMcal{Q}(a,b,c)=1$ if one of the conditions $a<b<c$, $b<c<a$, $c<a<b$ or $a=b\ne c$ holds;
otherwise, $\CMcal{Q}(a,b,c)=0$. A {\em queue inversion triple} of $\sigma$ is a triple $(a,b,c)$ of entries in $\sigma$ such that (as illustrated on the left of the diagram)

\begin{enumerate}
	\item $b$ and $c$ are in the same row, with $c$ to the right of $b$;
	\item $a$ and $b$ are in the same column, with $b$ immediately below $a$;
	\item $\CMcal{Q}(a,b,c)=1$, with $c\ne 0$.
\end{enumerate}

An {\em inversion triple} of $\sigma$ is a triple $(a,b,c)$ of entries in $\sigma$ satisfying (2), (3) and
\begin{enumerate}
	\setcounter{enumi}{3}
    \item $a$ and $c$ are in the same row, with $c$ to the right of $a$.
\end{enumerate}
This is illustrated on the right of the diagram.

\begin{figure}[H]
 \centering
	\begin{minipage}[H]{0.45\linewidth}
 \centering
		\begin{ytableau}
		a  &\none[]
		\\
		b &\none[\dots] & c\\
	\end{ytableau}
\caption*{queue inversion triple}
	\end{minipage}
	\begin{minipage}[H]{0.4\linewidth}
 \centering
		\begin{ytableau}
			a  &\none[\dots] & c
			\\
			b
			&  \none[]\\
		\end{ytableau}
\caption*{inversion triple}
	\end{minipage}
\end{figure}

Let quinv$(\sigma)$ and inv$(\sigma)$ be the numbers of queue inversion triples and inversion triples of $\sigma$, respectively. Following the same strategy to prove Equation (6.1) of \cite{LN12}, Theorem \ref{T:3} can be proved for $t=1$; that is, for $\sigma\in \T(\mu)$,
\begin{align}\label{E:invq}
\sum_{\tau\in [\sigma]}q^{\inv(\tau)}=\sum_{\tau\in [\sigma]}q^{\quinv(\tau)}=\prod_{i=1}^{\ell(\mu)}{a_{i,1}+\cdots+a_{i,N} \brack a_{i,1},\ldots,a_{i,N}}_q,
\end{align}	
where the $i$th row of $\sigma$ contains exactly $a_{i,1}$ copies of $1$, $a_{i,2}$ copies of $2$, and so on. The rightmost term of (\ref{E:invq}) is a product of $q$-multinomial coefficients (see for instance \cite{Loehr,Stanley}). 
The original motivation in \cite{LN12} was to bijectively prove an equality involving a specialization of modified Macdonald polynomials via the inversion flip operator. It turns out that such an operator is of central importance in deducing a compact combinatorial formula for the modified Macdonald polynomials \cite[Theorem 3.5]{CHO22}.

Let us now recall a formula (Equation (6.2) from \cite{LN12}) to explain the concept of $\mu$-Mahonian statistics. Let $\tau'$ be the filling obtained by transposing the filling $\tau$. In other words, $\tau\in \T(\mu)$ if and only if $\tau'\in \T(\mu')$.
Then, the generating function of $\maj(\tau')$ over $\tau\in[\sigma]$ also equals (\ref{E:invq}); namely,
\begin{align*}
\sum_{\tau\in [\sigma]}q^{\inv(\tau)}=\sum_{\tau\in [\sigma]}q^{\maj(\tau')}.
\end{align*}	
Throughout this paper, any statistic whose distribution over $\T(\mu)$ or $\T(\mu')$ equals the distribution of $\inv$ on $\T(\mu)$ is called a {\em $\mu$-Mahonian statistic}. Therefore, the statistics $\quinv$, $\inv$, and $\maj$ are $\mu$-Mahonian statistics.


\section{Reverse and flip operators}\label{S:2operators}
This section is focused on two operators: the reverse operator and the queue inversion flip operator introduced by Ayyer, Mandelshtam and Martin \cite[Definition 7.3]{AMM23}. The latter was inspired by the inversion flip operator proposed by Loehr and Niese \cite[Definition 5.1]{LN12}.

Both operators are related to the decomposition of the Young diagram of $\mu$ into rectangles \cite{CHO22}. Each Young diagram of $\mu$ is regarded as a concatenation of
maximal rectangles, with heights strictly decreasing from left to right. For any $\sigma\in\T(\mu)$, let $\sigma_i$ denote the filling of the $i$th rectangle of the Young diagram of $\mu$ from left to right. We write $\sigma=\sigma_1 \sqcup \cdots \sqcup \sigma_p$, where $p$ is the number of distinct parts of $\mu'$, and $\mu_1'$ is the height of $\sigma_1$. See Figure \ref{F:b2} for an example.
\begin{figure}[ht]
	\centering
	\includegraphics[scale=0.7]{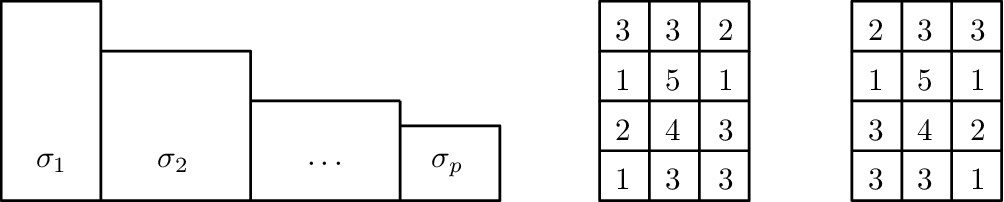}
	\caption{\label{F:b2} A decomposition of $\sigma=\sigma_1 \sqcup\sigma_2\sqcup \cdots \sqcup \sigma_p$ (left), a rectangular filling $\sigma$ (middle) and its reverse (right).}
\end{figure}

\begin{definition}[Reverse operator $\rev$]
For a partition $\mu$ and a filling $\sigma=\sigma_1 \sqcup \cdots \sqcup \sigma_p\in\T(\mu)$, define $\rev(\sigma)=\rev(\sigma_1)\sqcup\cdots\sqcup \rev(\sigma_p)$ as the reverse of $\sigma$, where the filling $\rev(\sigma_i)$ is obtained from $\sigma_i$ by reversing the sequence of entries in each row (see Figure \ref{F:b2}).
\end{definition}
We adopt some notations from \cite{AMM23,O:24} to describe the flip operator. 
An integer $i$ is said to be {\em $\mu$-compatible} if $\mu'_i=\mu'_{i+1}\ge 1$ and $1\le i<\mu_1$. That is, $i$
 is $\mu$-compatible if columns $i$ and $i+1$ are of equal height in the Young diagram of $\mu$.
 \begin{definition}[Flip operator $\rho_i^r$]\label{Def:2}
For a filling $\sigma\in\T(\mu)$, a $\mu$-compatible index $i$ and an integer $r$ such that $1\le r\le \mu_i'$, let $t_i^{(r)}$ be the operator that acts on $\sigma$ by interchanging the entries $\sigma(r,i)$ and $\sigma(r,i+1)$. For $1\le s\le r\le \mu'_i$, let
\begin{align*}
t_i^{[s,r]}:=t_i^{(s)}\circ t_i^{(s+1)}\cdots \circ t_i^{(r)}
\end{align*}
 denote the flip operator that swaps the entries of boxes $(x,i)$ and $(x,i+1)$ for all $x$ with $s\le x\le r$. 
 The flip operator $\rho_i^r$ is defined as follows:
 \begin{itemize}
 	\item  If $\sigma(x,i)=\sigma(x,i+1)$ for all $1\le x\le r$, then $\rho_i^r(\sigma)=\sigma$.
 	\item Otherwise, let $k$ be the largest integer such that $\sigma(k,i)\ne \sigma(k,i+1)$ and $k\le r$. Let $h$ be the largest integer satisfying $h\le k$, $\sigma(h,i)\ne \sigma(h,i+1)$ and
 	 \begin{align*}
 		\CMcal{Q}(\sigma(h,i),\sigma(h-1,i),\sigma(h-1,i+1))
 		=\CMcal{Q}(\sigma(h,i+1),\sigma(h-1,i),\sigma(h-1,i+1)),
 	\end{align*}
 where we set $\sigma(0,j)=\infty$ for $1\le j\le \mu_1$. We define $\rho_i^r(\sigma)=t_i^{[h,k]}(\sigma)$.
 \end{itemize}
 We call rows $k$ and $h$ the starting and terminating rows of $\rho_i^{r}$, respectively. For simplicity, we write
 \begin{align*}
 \rho_i=\rho_i^{\mu'_i}\quad\mbox{ and }\quad t_i=t_i^{(\mu'_i)}.
 \end{align*}
 By definition, $\rho_i^r\circ\rho_i^r(\sigma)=\sigma$; that is, $\rho_i^r$ is an involution on $\T(\mu)$.
\end{definition}
\begin{remark}
We point out two differences between the flip operator in Definition \ref{Def:2} and
the queue inversion flip operator that appears in \cite[Definition 7.3]{AMM23} and \cite[Definition 2.18]{O:24}. First, the operator in \cite[Definition 7.3]{AMM23} always starts from the topmost row (that is, $\rho_i$), whereas our flip operator is allowed to begin at any row. Second, we add the condition $\sigma(h,i)\ne \sigma(h,i+1)$ to the terminating row $h$. These two modifications are intended to quantify the changes in $\quinv$, $\inv$ and ${N}\des$ (see Theorem \ref{thm3.4}), which ultimately contribute to the proof of Theorem \ref{T:3}.
\end{remark}
\begin{example}
	Let $\mu=(2^6)$. For the filling $\sigma\in \T(\mu)$, the $\mu$-compatible index $i=1$ and $r=\mu_1'=6$, we show that $\rho_1(\sigma)=\rho_1^6(\sigma)=t_1^{[3,5]}(\sigma)$. 
	
	Since $\raisebox{-2pt}{\young(93)}$ is the topmost row with different entries, the flipping process
	$\rho_1$ begins at this row; that is, $k=5$. We next determine $h$.  
		Since $\CMcal{Q}(9,2,5)\ne\CMcal{Q}(3,2,5)$ and $\CMcal{Q}(2,3,9)\ne\CMcal{Q}(5,3,9)$, while $\CMcal{Q}(3,5,8)=\CMcal{Q}(9,5,8)$ and $3\ne 9$, 
	    we obtain $h=3$ by Definition \ref{Def:2}. Equivalently, the flipping process
		$\rho_1$ terminates in row $3$. Consequently, $\rho_1(\sigma)=t_1^{[3,5]}(\sigma)$.
	\begin{align*}
	\sigma=\,\,\,\begin{ytableau}[] 3&3\\ *(green) 9 & *(green) 3\\ 2 & 5 \\ 3 & 9  \\ 5 & 8  \\ 9 & 6
	\end{ytableau}
	\rightarrow
	\begin{ytableau}[] 3&3\\ 3 & 9\\ *(green) 2 & *(green) 5 \\ 3 & 9  \\ 5 & 8  \\ 9 & 6
	\end{ytableau}
	\rightarrow
	\begin{ytableau}[] 3&3\\ 3 & 9\\ 5 & 2 \\ *(green) 3 & *(green) 9  \\ 5 & 8  \\ 9 & 6
	\end{ytableau}
	\rightarrow
	\begin{ytableau}[] 3&3\\ 3 & 9\\ 5 & 2 \\ 9 & 3  \\ 5 & 8  \\ 9 & 6
	\end{ytableau}\,\,\,=\rho_1(\sigma)
	\end{align*}
	
\end{example}

\section{Roadmap for the proof of Theorem \ref{T:3}}\label{S:mainproof}
The purpose of this section is to present our strategy to prove Theorem \ref{T:3}. The proof is bijective: for a partition $\mu$, we construct a bijection $\varphi:\T(\mu)\rightarrow \T(\mu)$ satisfying $\varphi(\sigma)\sim \sigma$ and 
\begin{align}\label{E:varphi1}
(\quinv,\maj)(\varphi(\sigma))=(\inv,\maj)(\sigma).
\end{align}
In particular, if the Young diagram of $\mu$ is a rectangle, we prove that $$(\inv,\quinv,\maj)(\varphi(\sigma))=(\quinv,\inv,\maj)(\sigma).$$

The bijection $\varphi$ is a composition of two bijections associated with the flip operator $\rho_i^{r}$. The first one $\gamma$ is described in Theorem \ref{pro1} below, and reduces to the reverse operator when the Young diagram of $\mu$ is rectangular. Let
$\sigma=\sigma_1\sqcup\cdots \sqcup\sigma_p$, and define
\begin{align}\label{eqthm3.6a}
\kappa(\sigma)= \sum_{i=1}^{p}(\quinv(\sigma_i)-\inv(\rev(\sigma_i))).
\end{align}
For any filling $\tau$, $\tau_1$ denotes the leftmost rectangular filling in the decomposition of $\tau$.
\begin{theorem}\label{pro1}
	There is a bijection $\gamma:\T(\mu)\rightarrow \T(\mu)$ satisfying $\gamma(\sigma)=\tau\sim \sigma$,
	\begin{align}\label{eqthm3.6b}
	\quinv(\tau)&=\inv(\sigma)+\kappa(\tau),\\
	\label{eqthm3.6c}
	\maj(\tau)&=\maj(\sigma),\\
	\label{eqthm3.6d}{N}\des(\sigma_1)&={N}\des(\rev(\tau_1)),
	\end{align}
	and the topmost rows of $\sigma$ and $\tau$ are the reverses of each other. 
\end{theorem}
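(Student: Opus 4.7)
My plan is to define $\gamma$ by induction on the number $p$ of rectangles in the maximal rectangle decomposition of $\dg(\lambda)$, using the reverse operator for the base case and a sequence of flip operators $\rho_i^r$ for the inductive step.

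For the \emph{base case} $p=1$ (where $\dg(\lambda)$ is a rectangle), I set $\gamma(\sigma)=\sigma^r$. Row-equivalence and the topmost-row condition are immediate. Every cell in a given row of a rectangle has the same leg, so reversing each row simply permutes the column positions of the descents in that row while preserving their total, whence $\maj(\sigma^r)=\maj(\sigma)$. Since $\gamma(\sigma)_1=\sigma^r$ and $(\gamma(\sigma)_1)^r=\sigma$, the condition $\CMcal{N}\des(\sigma_1)=\CMcal{N}\des((\gamma(\sigma)_1)^r)$ becomes $\CMcal{N}\des(\sigma)=\CMcal{N}\des(\sigma)$, and the $\kappa$-identity reduces to the tautology $\quinv(\sigma^r)=\inv(\sigma)+(\quinv(\sigma^r)-\inv(\sigma))$.

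For the \emph{inductive step} $p\geq 2$, the reverse operator alone fails in general: for $\lambda=(2,1)$ with bottom row $(1,2)$ and top row $(3)$, one has $\sigma^r=\sigma$ but $\quinv(\sigma)\neq\inv(\sigma)$. I would instead build $\gamma(\sigma)$ by applying a sequence of flip operators $\rho_i^r$ internal to the tallest rectangle $\sigma_1$ (whose adjacent columns are pairwise $\lambda$-compatible) that transforms $\sigma_1$ toward $\sigma_1^r$ and, through the overspill of each $\rho_i^r$ below the top of $\sigma_2$, shifts the entries of the lower rectangles. The ending-row condition in the definition of $\rho_i^r$ (the $\CMcal{Q}$-equality) is engineered precisely so that each individual flip alters $\inv$, $\quinv$, and $\CMcal{N}\des$ by tractable amounts; this is the content of the announced Theorem~\ref{thm3.4}, which I would invoke to accumulate the changes through the sequence and then apply $\gamma$ recursively to the remainder.

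The \emph{main obstacle} will be matching the boundary contributions between $\sigma_1$ and the remaining rectangles. A column reversal inside $\sigma_1$ alters the inv- and quinv-triples whose $(a,b)$-pair sits in $\sigma_1$ while the $c$-entry sits in $\sigma_2$ (and symmetrically), and one must show that the net change in $\quinv(\gamma(\sigma))-\inv(\sigma)$ equals exactly the correction $\kappa(\gamma(\sigma))=\sum_i\bigl(\quinv(\gamma(\sigma)_i)-\inv(\gamma(\sigma)_i^r)\bigr)$. The virtual $\infty$-row (for $\inv$) and $0$-row (for $\quinv$) interact non-symmetrically with the boundary between adjacent rectangles of different heights, so carefully unraveling which cross-rectangle triples are preserved, created, or destroyed by each flip --- and verifying that the post-flip configuration falls under the inductive hypothesis --- will be the most delicate bookkeeping step.
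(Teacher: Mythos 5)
Your base case is correct and coincides with the paper's: for a rectangle the paper's $\gamma$ does reduce to the reverse operator, and your verification of (\ref{eqthm3.6b})--(\ref{eqthm3.6d}) there is sound. The inductive step, however, is where essentially all of the content of the theorem lies, and your proposal does not supply it: you name the ``main obstacle'' (the cross-boundary bookkeeping) and leave it unresolved, which is exactly the part that requires an explicit construction and computation. Your setup for that step is also off in two ways. First, the maximal rectangles are concatenated \emph{horizontally}, with heights strictly decreasing from left to right, so $\sigma_2$ sits to the right of (not below) $\sigma_1$; a flip $\rho_i^r$ with $i,i+1$ inside $\sigma_1$ acts on full columns of $\sigma_1$ and never ``overspills'' into $\sigma_2$ --- it only changes which cross-rectangle triples count. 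In particular, flips confined to $\sigma_1$ cannot rearrange the rows that span several rectangles in the way $\gamma$ must (compare Example \ref{Exam:3}, where $\gamma(\sigma)$ is not obtained from $\sigma$ by operations restricted to any one rectangle).

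Second, the paper inducts on the number of \emph{rows}, not rectangles: it strips off the (shortest) top row $b=(b_1,\ldots,b_s)$, applies $\gamma$ recursively to the rest, and reattaches $b$ reversed. The crux is that after the recursive call the entries $c_i$ formerly below $b_i$ have migrated to columns $n+1-i$, so the paper reattaches the padded row $(0,\ldots,0,b_s,\ldots,b_1)$ with $n-s$ zeros and then transports the zeros to the right end one column at a time; each transport is a composition of $\phi_{n-s}$ (Theorem \ref{thm3.4}) with corrective factors $\rho_{n-s}$ and $t_{n-s}$ chosen by a four-case analysis, and Lemma \ref{lem3.3} quantifies the resulting changes in $\quinv$ and $\maj$ so that summing over all transports yields exactly the correction $\kappa(\gamma(\sigma))$ and preserves $\maj$. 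This zero-padding-and-transport mechanism is the missing idea; without it, or an equivalent device for moving entries between columns of different heights while controlling $\quinv$, $\maj$ and $\CMcal{N}\des$ simultaneously, your inductive step does not go through.
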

The second bijection $\theta:\T(\mu)\rightarrow \T(\mu)$ acts on each rectangle of any filling independently and decreases the number of queue inversion triples by $\kappa(\gamma(\sigma))$ while preserving the major index. Using this, we find the bijection $\varphi$ with the property (\ref{E:varphi1}). Because both bijections $\gamma$ and $\theta$ are constructed using the involution $\phi_i$ defined below, we first prove Theorem \ref{thm3.4} in Section \ref{S:5}, and then establish Theorems \ref{T:3} and \ref{pro1} in Section \ref{S:6}.
\begin{theorem}\label{thm3.4}
	For a partition $\mu$ and a $\mu$-compatible index $i$, let $\sigma\in\T(\mu)$ and let $x_i$ be the number of non-descents in the $i$th column of $\sigma$.	
	Then there is an involution $\phi_i:\T(\mu)\rightarrow \T(\mu)$ such that $\phi_i(\sigma) \sim \sigma$, and for $\nu\in\{\inv,\quinv\}$,
	\begin{align}
	\label{E:majthm62}\maj(\phi_i(\sigma))&= \maj(\sigma), \\
	\label{E:quinvthm62}\nu(\phi_i(\sigma))&=\nu(\sigma)+x_{i+1}-x_i,\\
	\label{E:ndesthm62}{N}\des(\phi_i(\sigma))&=s_i\,{N}\des(\sigma),
	\end{align}
	where ${s_i}\,(\ldots x_i,x_{i+1}\ldots)=(\ldots x_{i+1},x_{i}\ldots)$.
\end{theorem}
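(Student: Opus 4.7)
The plan is to construct $\phi_i$ as a composition of flip operators $\rho_i^r$ from Definition \ref{Def:2}, choosing the starting rows $r$ so that the combined effect transposes the non-descent profiles of columns $i$ and $i+1$. The hypothesis that $i$ is $\lambda$-compatible (i.e.\ $\lambda'_i=\lambda'_{i+1}$) is essential: corresponding boxes in the two columns share the same leg, so moving a descent from one column to the other in the same row leaves $\maj$ untouched. This will be the starting point for establishing (\ref{E:majthm62}).

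My first step is to pin down the effect of a single block flip $t_i^{[h,k]}$ on all four statistics. Inside the block $[h,k]$, the unordered pairs $\{\sigma(r,i),\sigma(r,i+1)\}$ are preserved row by row, so any quinv or inv triple lying entirely within the block contributes the same $\mathcal{Q}$-value before and after; the only genuine changes arise at the two boundaries, namely the top row $k+1$ (or the phantom row when $k=\lambda'_i$) and the row $h-1$ just below the block. The maximality of $k$ in Definition \ref{Def:2} makes the top-boundary contribution transparent, while the $\mathcal{Q}$-equality condition defining $h$ was designed precisely so that the bottom-boundary change produces matching increments in $\quinv$ and in $\inv$. A careful case analysis should then show that within the block the non-descents in columns $i$ and $i+1$ are exchanged, that $\maj$ is preserved because the exchanged descents sit in rows of equal leg, and that both $\quinv$ and $\inv$ change by the difference of block-restricted non-descent counts of columns $i+1$ and $i$.

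My second step is to assemble $\phi_i$ from these block moves. I would scan $r$ from $\lambda'_i$ downward and apply $\rho_i^{r}$ whenever the current column-$i$ non-descent status at row $r$ disagrees with its target (the original column-$(i+1)$ status at that row), and leave $r$ untouched otherwise. Each $\rho_i^{r}$ is self-inverse on its own block; the modification introduced in Definition \ref{Def:2} requiring $\sigma(h,i)\ne\sigma(h,i+1)$ at the terminating row, together with the maximality of $k$, forces successive blocks to be disjoint, so the composition $\phi_i$ is itself an involution and $\phi_i(\sigma)\sim\sigma$. Summing the block-wise effects telescopes to exactly $x_{i+1}-x_i$ for both $\quinv$ and $\inv$, yielding (\ref{E:quinvthm62}), while the aggregate of block-local status swaps realizes the transposition $(i,i+1)\circ\mathcal{N}\des(\sigma)$ required by (\ref{E:ndesthm62}).

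The main obstacle I expect is the boundary bookkeeping at row $h-1$, in particular when $h=1$ (where $\sigma(0,i)=\sigma(0,i+1)=\infty$) and when some column entries coincide. The $a=b\ne c$ clause in the definition of a quinv (respectively inv) triple breaks an apparent symmetry between the two statistics, and it is only the added condition $\sigma(h,i)\ne\sigma(h,i+1)$ at the ending row that lets one argue the $\mathcal{Q}$-equality balances the changes in $\quinv$ and in $\inv$ simultaneously. To avoid redoing the entire case work for both statistics, I would try to exploit the reverse operator from Section \ref{S:2operators} as a duality $\sigma\mapsto\sigma^r$ that swaps $\inv$ and $\quinv$, transporting the analysis for one statistic to the other and recovering the common expression $x_{i+1}-x_i$ in (\ref{E:quinvthm62}).
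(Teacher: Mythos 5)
Your overall strategy --- building $\phi_i$ as a product of flip operators $\rho_i^r$ acting on disjoint blocks of rows --- is indeed the one the paper uses, but the proposal leaves the two hardest points unproved, and your selection rule for which blocks to flip is not the right one. Your rule (flip at row $r$ whenever the column-$i$ descent status there disagrees with the original column-$(i{+}1)$ status) aims to transpose the descent profile row by row; but a flip $t_i^{[h,k]}$ only exchanges the two columns' descent statuses at the interior rows of the block, while the descent pairs straddling the boundaries (between rows $k{+}1$ and $k$, and between rows $h$ and $h{-}1$) change in a way governed only by the $\CMcal{Q}$-conditions of Definition \ref{Def:2}. Hence a row-by-row transposition of statuses is in general unattainable, and indeed (\ref{E:ndesthm62}) only asserts that the column totals $x_i,x_{i+1}$ are swapped. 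Moreover, since your selection rule depends on the filling, the involution property requires showing that the same blocks are selected when the rule is applied to $\phi_i(\sigma)$; the paper devotes a substantial argument to exactly this (invariance under $\phi_i$ of the decomposition into components and of the block types of Definition \ref{Def:3}), and nothing in your sketch replaces it.

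More seriously, the identity (\ref{E:quinvthm62}) must hold simultaneously for $\nu=\inv$ and $\nu=\quinv$, and by Lemma \ref{L:2} a single flip $\rho_i^r=t_i^{[h,k]}$ changes $\quinv$ by $\pm1$ according to the $\CMcal{Q}$-condition at its \emph{starting} (top) row but changes $\inv$ by $\pm1$ according to the $\CMcal{Q}$-condition at its \emph{ending} (bottom) row. Getting the same increment for both statistics is precisely why the paper flips a component only when the number $|\T_m|$ of descent blocks it contains is odd: the descent blocks inside a component alternate between left and right type, so odd parity makes the topmost and bottommost descent blocks compatible and forces the two boundary conditions to yield matching $\pm1$'s. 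Your proposal does not engage with this asymmetry, and the suggested shortcut via the reverse operator cannot repair it: by Lemma \ref{L:1}, $\sigma\mapsto\sigma^r$ does not exchange $\inv$ and $\quinv$ but introduces the correction term $\sum_i x_i(n-2i+1)$, so the $\quinv$ analysis cannot be transported to $\inv$ verbatim. As written, the proposal is a plausible outline of the right construction but omits the combinatorial core of the proof.
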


\section{Involution on arbitrary fillings}\label{S:5}
This section contains a sequence of auxiliary lemmas that head to Theorem \ref{thm3.4}. We begin by discussing how the statistics $\quinv$, $\inv$ and $\maj$ change under the reverse and flip operators; then, we proceed by exploiting these properties to confirm Theorem \ref{thm3.4}.
\subsection{Operators and statistics}
Let $\sigma\vert_{i}^{j}$ denote the segment of $\sigma$ from row $i$ through row $j$. For any filling $\sigma$ of a rectangular diagram, we can express $\kappa(\sigma)$ explicitly. For simplicity, define $\chi(\A)=1$ if  statement $\A$ is true, and $\chi(\A)=0$ otherwise.
\begin{lemma}\label{L:1}
	For a partition $\mu=(n^m)$ and a filling $\sigma\in\T(\mu)$, let $x_i$ be the number of non-descents in column $i$ of $\sigma$. Then, 
	\begin{align}\label{eq5}
	\quinv(\sigma)-\inv(\rev(\sigma))=\inv(\sigma)-\quinv(\rev(\sigma))
	=\sum_{i=1}^{n} x_i(n-2i+1).
	\end{align}
\end{lemma}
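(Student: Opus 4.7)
The plan is to reduce the identity to a column-pair-wise comparison and telescope over rows. Since the reverse operator simply flips each row of $\sigma$, an inversion triple in $\sigma^r$ pulls back to a triple $(a,b,c)$ in $\sigma$ with $a$ directly above $b$ in a single column and $c$ in the same row as $a$ but to its \emph{left}. Writing $\inv^{L}(\sigma)$ for this left-facing inversion count, we have $\inv(\sigma^r)=\inv^{L}(\sigma)$, so it suffices to prove $\quinv(\sigma)-\inv^{L}(\sigma)=\sum_{i=1}^{n} x_i(n-2i+1)$.

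Fix a column pair $(i,j)$ with $i<j$. At each pair of adjacent rows with top row $R$ and bottom row $S$ (in $\hat\sigma$ for quinv, in $\tilde\sigma$ for $\inv^{L}$), the contribution of this column pair to quinv is $\CMcal{Q}(R_i,S_i,S_j)$, while its contribution to $\inv^{L}$ is $\CMcal{Q}(R_j,S_j,R_i)$. The two statistics share the internal row pairs, but quinv has an additional top-phantom pair with $R=(0,\dots,0)$ and $\inv^{L}$ has an additional bottom-phantom pair with $S=(\infty,\dots,\infty)$.

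The heart of the argument is a local identity: for any four values $a,b,a',b'$,
\begin{equation*}
\CMcal{Q}(a,b,b')-\CMcal{Q}(a',b',a)
=\chi(a\le b)-\chi(a'\le b')+\chi(b<b')-\chi(a<a').
\end{equation*}
This is essentially the lemma itself applied to the $2\times 2$ subfilling with top row $(a,a')$ and bottom row $(b,b')$, and I would verify it by a direct case check on the linear order of the multiset $\{a,b,a',b'\}$ (with care for ties). Applying the identity to each internal row pair $r=1,\dots,m-1$ at column pair $(i,j)$ and summing, the non-descent indicators $\chi(\sigma(r+1,i)\le\sigma(r,i))-\chi(\sigma(r+1,j)\le\sigma(r,j))$ accumulate to $x_i-x_j$, while the ascent terms $\chi(\sigma(r,i)<\sigma(r,j))-\chi(\sigma(r+1,i)<\sigma(r+1,j))$ telescope to $\chi(\sigma(1,i)<\sigma(1,j))-\chi(\sigma(m,i)<\sigma(m,j))$.

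Finally, using $\CMcal{Q}(0,x,y)=\chi(x<y)$ and $\CMcal{Q}(y,\infty,x)=\chi(x<y)$ for positive entries, the top-phantom quinv term contributes $\chi(\sigma(m,i)<\sigma(m,j))$ and the bottom-phantom $\inv^{L}$ term contributes $\chi(\sigma(1,i)<\sigma(1,j))$, and these cancel the telescoping boundary exactly. Hence each column pair $(i,j)$ with $i<j$ contributes $x_i-x_j$ to $\quinv(\sigma)-\inv^{L}(\sigma)$, and $\sum_{i<j}(x_i-x_j)=\sum_i x_i(n-2i+1)$ gives the first equality. The second equality follows by applying the first to $\sigma^r$ in place of $\sigma$, using that the non-descent sequence of $\sigma^r$ is $(x_n,\dots,x_1)$, so that the right-hand side negates correctly. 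The main obstacle I anticipate is a clean, case-free proof of the local identity; the telescoping and phantom cancellations are then bookkeeping.
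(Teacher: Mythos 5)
Your argument is correct, and I checked the key local identity: it does hold, including in the presence of ties and of the phantom values $0$ and $\infty$. Your route differs from the paper's in how the computation is organized. The paper first collapses $\sigma$ to its two-row horizontal slices via $\nu(\sigma)=\sum_{i=1}^{m-1}\nu(\sigma\vert_i^{i+1})-\sum_{i=2}^{m-1}\nu(\sigma\vert_i^i)$ and $\inv(\sigma\vert_i^i)=\quinv(\sigma^r\vert_i^i)$, and then, for an $n\times 2$ rectangle, case-analyzes the four-term quantity $\CMcal{Q}(a_i,b_i,a_j)+\CMcal{Q}(b_i,\infty,b_j)-\CMcal{Q}(a_j,b_j,b_i)-\CMcal{Q}(0,a_j,a_i)$ according to the descent pattern of the two columns, showing it equals $\chi(a_i\le b_i)-\chi(a_j\le b_j)$; it also computes $\inv(\sigma)-\quinv(\sigma^r)$ first and passes to the other equality by reversal, the mirror image of your final step. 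You instead keep all $m$ rows, fix a column pair, and telescope a two-term local identity whose boundary terms are exactly absorbed by the two phantom rows; the descent-pattern case analysis disappears and is replaced by the correction terms $\chi(b<b')-\chi(a<a')$. The one step you flagged as an obstacle, a case-free proof of the local identity, is available: for any $a,b,c$ (with the convention $0<x<\infty$ for positive $x$) one checks that $\chi(a\le b)+\chi(b<c)+\chi(c<a)\in\{1,2\}$ and that
\begin{equation*}
\CMcal{Q}(a,b,c)=\chi(a\le b)+\chi(b<c)+\chi(c<a)-1,
\end{equation*}
whence $\CMcal{Q}(a,b,b')-\CMcal{Q}(a',b',a)=\chi(a\le b)-\chi(a'\le b')+\chi(b<b')-\chi(a<a')$ by cancellation of the common term $\chi(b'<a)$; the same formula yields your phantom evaluations $\CMcal{Q}(0,x,y)=\chi(x<y)$ and $\CMcal{Q}(y,\infty,x)=\chi(x<y)$. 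With that in hand, the identification $\inv(\sigma^r)=\inv^{L}(\sigma)$, the telescoping and phantom cancellation, the sum $\sum_{i<j}(x_i-x_j)=\sum_i x_i(n-2i+1)$, and the deduction of the second equality by applying the first to $\sigma^r$ are all correct, so your proof is complete and, if anything, more uniform than the paper's.
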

\begin{proof}
Since (\ref{eq5}) is trivially true for $m=1$ or $n=1$, we assume $m\ge 2$ and $n\ge 2$.	
Given a partition $\mu=(n^m)$ and a filling $\sigma\in \T(\mu)$, we have, by definition,  
\begin{align}\label{E:nu1}
\nu(\sigma)&=\nu(\sigma\vert_{1}^{m-1})+\nu(\sigma\vert_{m-1}^{m})-\nu(\sigma\vert_{m-1}^{m-1})
\end{align}
for $\nu\in\{\inv,\quinv\}$, which eventually leads to
\begin{align}\label{E:inv1}
\nu(\sigma)&
=\sum_{i=1}^{m-1}\nu(\sigma\vert_{i}^{i+1})-\sum_{i=2}^{m-1}\nu(\sigma\vert_{i}^{i}).
\end{align}
Considering $\inv(\sigma\vert_{i}^{i})=\quinv(\rev(\sigma)\vert_{i}^{i})$, we obtain
\begin{align}\label{E:inv_quinv1}
\inv(\sigma)-\quinv(\rev(\sigma))=\sum_{i=1}^{m-1}(\inv(\sigma\vert_{i}^{i+1})-\quinv(\rev(\sigma)\vert_{i}^{i+1})),
\end{align}
which reduces the task of counting $\inv(\sigma)-\quinv(\rev(\sigma))$ to counting this quantity for the case of a two-row rectangular filling. That is, the case $m=2$.

Assume $\mu=(n^2)$ and let $\tau=\begin{ytableau}[] a_1\\ b_1
\end{ytableau}
\begin{ytableau}[] a_2\\ b_2
\end{ytableau}
\begin{ytableau} \cdots\\ \cdots
\end{ytableau}
\begin{ytableau}[] a_n\\ b_n
\end{ytableau}\in \T(\mu)$. Then, 
we calculate the difference
\begin{align}
&\qquad\inv(\tau)-\quinv(\rev(\tau))\notag\\
\label{E:inq1}&=\sum_{1\le i<j\le n}\CMcal{Q}(a_i,b_i,a_j)+\CMcal{Q}(b_i,\infty,b_j)-\CMcal{Q}(a_j,b_j,b_i)-\CMcal{Q}(0,a_j,a_i)
\end{align}
by distinguishing whether $(a_i,b_i)$ or $(a_j,b_j)$ is a descent of $\tau$.
\begin{enumerate}
\item
If $a_i>b_i$, then $\CMcal{Q}(a_i,b_i,a_j)+\CMcal{Q}(b_i,\infty,b_j)
=\chi(b_i<a_j<a_i)+\chi(b_i>b_j)$; otherwise, $a_i\le b_i$ and $\CMcal{Q}(a_i,b_i,a_j)+\CMcal{Q}(b_i,\infty,b_j)=\chi(a_j<a_i)+\chi(b_i<a_j)+\chi(b_i>b_j)$.
\item
If $a_j>b_j$, then $\CMcal{Q}(a_j,b_j,b_i)+\CMcal{Q}(0,a_j,a_i)=\chi(b_j<b_i<a_j)+\chi(a_j<a_i)$; otherwise, $a_j\le b_j$ and $\CMcal{Q}(a_j,b_j,b_i)+\CMcal{Q}(0,a_j,a_i)=\chi(a_j<a_i)+\chi(b_i<a_j)+\chi(b_i>b_j)$.
\end{enumerate}
It is easily seen that the summand on the right-hand side of (\ref{E:inq1}) equals zero if both $(a_i,b_i)$ and $(a_j,b_j)$ are non-descents of $\tau$. If both $(a_i,b_i)$ and $(a_j,b_j)$ are descents of $\tau$, then 
\begin{align*}
&\quad\CMcal{Q}(a_i,b_i,a_j)+\CMcal{Q}(b_i,\infty,b_j)-\CMcal{Q}(a_j,b_j,b_i)-\CMcal{Q}(0,a_j,a_i),\\
&=\chi(b_i<a_j<a_i)-\chi(a_j<a_i)+\chi(b_i>b_j)-\chi(b_j<b_i<a_j),\\
&=\chi(b_j<b_i\,\,\mbox{and}\,\, b_i\ge a_j)-\chi(a_j<a_i\,\,\mbox{and}\,\, b_i\ge a_j)=0.
\end{align*}
The last equality follows by discussing the cases $b_i<a_j$ and $b_i\ge a_j$. If $b_i<a_j$, then clearly $\chi(b_j<b_i\,\,\mbox{and}\,\, b_i\ge a_j)-\chi(a_j<a_i\,\,\mbox{and}\,\, b_i\ge a_j)=0$; otherwise, if $b_i\ge a_j$, then $a_i>b_i\ge a_j>b_j$, implying that $\chi(b_j<b_i\,\,\mbox{and}\,\, b_i\ge a_j)-\chi(a_j<a_i\,\,\mbox{and}\,\, b_i\ge a_j)=0$.

If $(a_i,b_i)$ is not a descent while $(a_j,b_j)$ is a descent, then
\begin{align}
&\quad\, \nonumber\CMcal{Q}(a_i,b_i,a_j)+\CMcal{Q}(b_i,\infty,b_j)-\CMcal{Q}(a_j,b_j,b_i)-\CMcal{Q}(0,a_j,a_i)\\
\nonumber&=\chi(b_i<a_j)+\chi(b_i>b_j)-\chi(b_j<b_i<a_j)\\
\nonumber &=\chi(b_i<a_j\,\,\mbox{and}\,\, b_i\le b_j)+\chi(b_i>b_j)\\
\label{E:inc1}&=\chi(b_i\le b_j)+\chi(b_i>b_j)=1.
\end{align}
The second-to-last equality holds because $b_i<a_j$ follows from $b_i\le b_j$ under the condition $a_j>b_j$. Similarly, if $(a_i,b_i)$ is a descent and $(a_j,b_j)$ is not a descent, then we have
\begin{align}
&\quad\, \nonumber\CMcal{Q}(a_i,b_i,a_j)+\CMcal{Q}(b_i,\infty,b_j)-\CMcal{Q}(a_j,b_j,b_i)-\CMcal{Q}(0,a_j,a_i)\\
\nonumber&=\chi(b_i<a_j<a_i)-\chi(a_j<a_i)-\chi(b_i<a_j)\\
\label{E:dec1}&=-\chi(b_i\ge a_j)-\chi(b_i<a_j)=-1.
\end{align}
Substituting (\ref{E:inc1}) and (\ref{E:dec1}) back into (\ref{E:inq1}) gives 
\begin{align*}
\inv(\tau)-\quinv(\rev(\tau))&=
|\{(i,j):a_i\le b_i\,\, \mbox{and}\,\,a_j>b_j,\,1\le i<j\le n\}|\\
  &\quad\quad-|\{(i,j):a_i>b_i\,\,\mbox{and}\,\,a_j\le b_j,\,1\le i<j\le n\}|,\\
&=|\{(i,j):a_i\le b_i,\,1\le i<j\le n\}|
-|\{(i,j):a_j\le b_j,\,1\le i<j\le n\}|\\
&=\sum_{i=1}^{n}\chi(a_i\le b_i)(n-2i+1).
\end{align*}
Therefore, the second equality in (\ref{eq5}) holds upon combining with (\ref{E:inv_quinv1}). An equivalent form of this equality is
\begin{align*}
\quinv(\sigma)-\inv(\rev(\sigma))
=\sum_{i=1}^{n} x_i(n-2i+1),
\end{align*}
which is obtained by substituting $i$ with $n-i+1$ in (\ref{eq5}). Therefore, the proof is complete.
\end{proof}

Let us now review the central properties of the flip operators.
\begin{lemma}\label{lem3.1}\cite[Lemmas 7.5 and 7.6]{AMM23}
	For a partition $\mu$ and a $\mu$-compatible $i$, let $\sigma\in\T(\mu)$, where $a,b$ are the entries in the top row of columns $i$ and $i+1$, respectively. If $a\ne b$, then
	\begin{align}\label{eq3.1}
	\maj\left(\rho_{i}(\sigma)\right)&=\operatorname{maj}(\sigma),\\
	\label{eq3.2}
	\quinv\left(\rho_{i}(\sigma)\right)&=\quinv(\sigma)+\chi(a>b)-\chi(a<b).
	\end{align}
Moreover, the number of queue inversion triples located between columns $i$ or $i+1$ and column $j$ of $\sigma$ for any $j>i+1$ is invariant under $\rho_i$.
\end{lemma}
We extend Lemma \ref{lem3.1} by analyzing the operator $\rho_i^r$ that initiates from any row $r$ (see Lemma \ref{L:2}). Although the proof of Lemma \ref{L:2} is analogous to that of Lemma \ref{lem3.1}, we provide it to make this paper self-contained. 

For a sub-filling $\nu$ of a filling $\sigma$, let $f(\nu)$ denote the filling obtained from $\nu$ after applying $f$ to $\sigma$. 


\begin{lemma}\label{L:2}
For a partition $\mu$ and a $\mu$-compatible index $i$,
let $\sigma\in\T(\mu)$ and suppose that 
$$\rho_i^{r}(\sigma)=t_i^{[\kappa_1,\kappa_2]}(\sigma).$$ 
Let $\raisebox{-8pt}{\young(ab,cd)}$ and $\raisebox{-8pt}{\young(st,uv)}$ be parts of $\sigma$ between columns $i$ and $i+1$ such that $\raisebox{-2pt}{\young(cd)}$ corresponds to the starting row $\kappa_2$ and $\raisebox{-2pt}{\young(st)}$ corresponds to the terminating row $\kappa_1$. We set $\sigma(0,j)=\infty$ and $\sigma(\mu_j'+1,j)=0$ for $1\le j\le \mu_1$.
\begin{itemize}
\item If $\CMcal{Q}(a,c,d)=\CMcal{Q}(b,c,d)=0$, then
\begin{align}\label{E:quinv1}
\quinv(\sigma)+1=\quinv(\rho_i^{r}(\sigma)).
\end{align}
Equivalently, if $\CMcal{Q}(a,c,d)=\CMcal{Q}(b,c,d)=1$, then
\begin{align}\label{E:quinv2}
	\quinv(\sigma)-1=\quinv(\rho_i^{r}(\sigma)).
\end{align}
\item  If $\CMcal{Q}(a,c,d)=\CMcal{Q}(b,c,d)$ and $\CMcal{Q}(s,u,t)=\CMcal{Q}(s,v,t)=0$, then
\begin{align}\label{E:inv11}
	\inv(\sigma)+1=\inv(\rho_i^{r}(\sigma)).
\end{align}
Equivalently, if $\CMcal{Q}(a,c,d)=\CMcal{Q}(b,c,d)$ and $\CMcal{Q}(s,u,t)=\CMcal{Q}(s,v,t)=1$, then
\begin{align}\label{E:inv12}
	\inv(\sigma)-1=\inv(\rho_i^{r}(\sigma)).
\end{align}
\end{itemize}
For all cases, that is, whenever $\CMcal{Q}(a,c,d)=\CMcal{Q}(b,c,d)$, we have
\begin{align}\label{E:maj1}
	\maj(\sigma)=\maj(\rho_i^{r}(\sigma)),
\end{align}
and the number of queue inversion triples (resp. inversion triples) induced between columns $i$ or $i+1$ and column $j$ of $\sigma$ for any $j>i+1$ is invariant under $\rho_i^r$.
\end{lemma}
\begin{remark}
	An important observation is that 
	$\CMcal{Q}(s,u,t)=\CMcal{Q}(s,v,t)$ if and only if $\CMcal{Q}(s,u,v)=\CMcal{Q}(t,u,v)$ for arbitrary four entries $s,t,u$ and $v$. Since $\raisebox{-2pt}{\young(st)}$ is the terminating row of $\rho_i^r$, we have $\CMcal{Q}(s,u,v)=\CMcal{Q}(t,u,v)$. It follows that $\CMcal{Q}(s,u,t)=\CMcal{Q}(s,v,t)$, implying that either (\ref{E:inv11}) or (\ref{E:inv12}) holds provided that $\CMcal{Q}(a,c,d)=\CMcal{Q}(b,c,d)$.
\end{remark}
\begin{proof}
As the proofs of (\ref{E:inv11}) and (\ref{E:inv12}) are along the lines of (\ref{E:quinv1}) and (\ref{E:quinv2}), respectively, we only prove the latter. 

First, we identify the equivalence between (\ref{E:quinv1}) and (\ref{E:quinv2}). Let $\rho_i^{r}(\sigma)=\tau$. Then, $\sigma=\rho_i^{r}(\tau)$ because $\rho_i^{r}$ is an involution. Whether $c<d$ or $c>d$, we have $\CMcal{Q}(a,c,d)=\CMcal{Q}(b,c,d)=0$ if and only if  $\CMcal{Q}(a,d,c)=\CMcal{Q}(b,d,c)=1$. It follows that (\ref{E:quinv1}) and (\ref{E:quinv2}) are equivalent. Therefore, it suffices to prove (\ref{E:quinv1}) and (\ref{E:maj1}) when $\CMcal{Q}(a,c,d)=\CMcal{Q}(b,c,d)$.

Changes in $\maj$ are only possible on the borders; namely, between rows $\kappa_2+1$ and $\kappa_2$, and between rows $\kappa_1-1$ and $\kappa_1$. Here, we only examine the former, because the latter is discussed similarly.
 
 Let $\pi=\raisebox{-8pt}{\young(ab,cd)}$. Then $\rho_i^{r}(\pi)=\raisebox{-8pt}{\young(ab,dc)}$, and we verify that $\maj(\pi)=\maj(\rho_i^{r}(\pi))$ when $\CMcal{Q}(a,c,d)=\CMcal{Q}(b,c,d)$.

If $\CMcal{Q}(a,c,d)=\CMcal{Q}(b,c,d)=0$, then either $c<\mini(a,b)\le \maxi(a,b)\le d$, or $d<c<\mini(a,b)$, or $\maxi(a,b)\le d<c$, or $\mini(a,b)\le d<c<\maxi(a,b)$. Otherwise, if $\CMcal{Q}(a,c,d)=\CMcal{Q}(b,c,d)=1$, then either $\maxi(a,b)\le c<d$, or  $c<d<\mini(a,b)$, or $d<\mini(a,b)\le \maxi(a,b)\le c$ or $\mini(a,b)\le c<d<\maxi(a,b)$. In all scenarios, we have $\maj(\pi)=\maj(\rho_i^{r}(\pi))$, resulting in (\ref{E:maj1}).

It remains to prove (\ref{E:quinv1}). 
First, we assert that $\rho_i^r$ preserves the number of queue inversion triples induced between the columns $i$ and $i+1$ that lie strictly below row $\kappa_2+1$. 

Let $\pi=\raisebox{-8pt}{\young(\alpha\beta,\gamma\delta)}$. If both rows of $\pi$ are reversed by $\rho_i^r$, then we must have $\delta\ne \gamma$ and $\CMcal{Q}(\alpha,\gamma,\delta)\ne \CMcal{Q}(\beta,\gamma,\delta)$. Since $\CMcal{Q}(\beta,\gamma,\delta)\ne \CMcal{Q}(\beta,\delta,\gamma)$ for $\delta\ne \gamma$, it follows that  $\CMcal{Q}(\alpha,\gamma,\delta)=\CMcal{Q}(\beta,\delta,\gamma)$. If $\raisebox{-2pt}{\young(\alpha\beta)}=\raisebox{-2pt}{\young(st)}$ is the terminating row $\kappa_1$, then $\CMcal{Q}(\alpha,\gamma,\delta)=\CMcal{Q}(\beta,\gamma,\delta)$ by definition. In both cases, the numbers of queue inversion triples derived from $\pi$ and $\rho_i^r(\pi)$ are equal. This proves the assertion.

If $\raisebox{-2pt}{\young(\gamma\delta)}=\raisebox{-2pt}{\young(cd)}$ is the starting row $\kappa_2$, then $c\ne d$. Since
$\CMcal{Q}(a,c,d)=0$ and $c\ne d$, we find
$\CMcal{Q}(a,d,c)=1$, which yields 
\begin{align}\label{E:qupi}
	\quinv(\pi)+1=\quinv(\rho_i^{r}(\pi)).
\end{align}
Second, we claim that the number of queue inversion triples induced by columns $i$ or $i+1$ and column $j$ for all $j>i+1$ is invariant under $\rho_i^r$. That is, this number is preserved by both $t_i^{(\kappa_2)}$ and $t_i^{(\kappa_1)}$. Here, we prove the claim only for $t_i^{(\kappa_2)}$ and omit the other analogous case. Consider the square of entries $a,b,c,d$ and let $z=\sigma(\kappa_2,j)$, as shown below.
\begin{center}
	\begin{minipage}[H]{0.2\linewidth}
		\begin{ytableau}
			a  & b & \none[]
			\\
			c & d & \none[\dots] & z\\
		\end{ytableau}
	\end{minipage}
	\begin{minipage}[H]{0.1\linewidth}
		\begin{ytableau}
				a  & b & \none[]
			\\
			d & c & \none[\dots] & z\\
		\end{ytableau}
	\end{minipage}
\end{center}
We examine all possible total orderings of $a,b,c,d$ such that $\CMcal{Q}(a,c,d)=\CMcal{Q}(b,c,d)=0$; namely:  $c>d\geq a>b$, $c>d\ge b\geq a$, $b>c>d\geq a$, $d\geq a>b>c$, $d\geq b\geq a>c$, $a>c>d\ge b$, $a>b>c>d$ and $b\geq a>c>d$.
For each of these situations, five possible ways exist to insert $z$, as outlined in Table \ref{Tab:1}. Note that only the cases when $d\geq a>b>c$ or $d\geq b\geq a>c$ are listed, because the others follow in the same manner. One readily sees that $\CMcal{Q}(a,c,z)+\CMcal{Q}(b,d,z)=\CMcal{Q}(a,d,z)+\CMcal{Q}(b,c,z)$. That is, the number of queue inversion triples is preserved in this case, which, in combination with the previous assertion and (\ref{E:qupi}), proves (\ref{E:quinv1}).

\begin{table}
\begin{center}
	\begin{tabular}{ccccccc}
		&  & $(\CMcal{Q}(a,c,z),\CMcal{Q}(b,d,z))$ &$(\CMcal{Q}(a,d,z),\CMcal{Q}(b,c,z))$\\
		\hline \\[-1.5ex]
		$z>d\geq a>b>c$ &  & $(0,1)$ & $(1,0)$\\[1ex]
		$z>d\geq b\geq a>c$& & $(0,1)$ & $(1,0)$\\
		\hline \\[-1.5ex]
		$d\geq z\geq a>b>c$ &  & $(0,0)$ & $(0,0)$ \\[1ex]
		$d\geq z\geq b\geq a>c$ & & $(0,0)$ & $(0,0)$ \\
		\hline \\[-1.5ex]
		$d\geq a>z\geq b>c$  &  & $(1,0)$ & $(1,0)$ \\[1ex]
		$d\geq b>z\geq a>c$ &   & $(0,1)$ & $(0,1)$\\
		\hline \\[-1.5ex]
		$d\geq a>b>z>c$  & & $(1,1)$ & $(1,1)$ \\[1ex]
		$d\geq b\geq a>z>c$ & & $(1,1)$ & $(1,1)$\\
		\hline \\[-1.5ex]
		$d\geq a>b>c\geq z$  & & $(0,1)$ & $(1,0)$ \\[1ex]
		$d\geq b\geq a>c\geq z$ & & $(0,1)$ & $(1,0)$ \\[1ex]
	\end{tabular}
\caption{Change in queue inversion triples for the cases when $d\geq a>b>c$ or $d\geq b\geq a>c$.} \label{Tab:1}
\end{center}
\end{table}
\end{proof}

\subsection{Proof of Theorem \ref{thm3.4}}
\
\vspace{-5mm}
\newline

Let $\tau=\raisebox{-8pt}{\young(ab,cd)}$. If exactly one of the columns of $\tau$ forms a descent; that is, $\chi(a>c)\ne \chi(b>d)$, then we call $\tau$ a {\em descent block}. It is a {\em right-descent block}
or {\em left-descent block}, depending on
whether the right or left column forms a descent pair. Otherwise, both columns of $\tau$ form descent pairs, or they both form non-descent pairs (that is, $\chi(a>c)=\chi(b>d)$). In this case, we call $\tau$ a {\em neutral block}. 

Every rectangular filling $\sigma$ of shape $(2^m)$ can be decomposed vertically into a sequence of blocks
 $\tau_1,\ldots,\tau_{m-1}$ from top to bottom such that $\tau_i$ and $\tau_{i+1}$ overlap in exactly one row for $1\le i<m-1$. We write 
\begin{align}\label{E:odot}
\sigma=\tau_1\odot\cdots \odot \tau_{m-1}.
\end{align}
Two descent blocks $\tau_i$ and $\tau_j$ are called {\em compatible} if they are both left-descent blocks or both right-descent blocks. They are said to be {\em neighbours} if $j=i+1$ or if $i<j$ and all blocks $\tau_{i+1},\ldots,\tau_{j-1}$ between them are neutral blocks (see Example \ref{E5.1}).
\begin{example}\label{E5.1}
	Let $\sigma$ be the filling of shape $(2^6)$, as shown below.
	\begin{align*}
		\sigma=\begin{ytableau}[]8 & 6 \\
			4 & 6 \\
			6 & 7 \\
			3 & 8 \\
			6 & 5 \\
			5 & 2
		\end{ytableau}\,.
	\end{align*}
Using (\ref{E:odot}), we have $\sigma=\tau_1\odot\tau_2\odot\tau_3\odot\tau_4\odot \tau_{5}$, where  
$$\tau_1=\raisebox{-8pt}{\young(86,46)}\,,\,\tau_2=\raisebox{-8pt}{\young(46,67)}\,,\,\tau_3=\raisebox{-8pt}{\young(67,38)}\,,\,\tau_4=\raisebox{-8pt}{\young(38,65)}\,,\,\tau_5=\raisebox{-8pt}{\young(65,52)}\,.$$
	
Blocks $\tau_1$, $\tau_3$, and $\tau_4$ are descent blocks, whereas blocks $\tau_2$ and $\tau_5$ are neutral blocks. Here, blocks $\tau_1$ and $\tau_3$ are compatible neighbours because they are both left-descent ones, and the only block between them (block $\tau_2$) is neutral. Furthermore, blocks $\tau_3$ and $\tau_4$ are incompatible neighbours, because $\tau_3$ is a left-descent block, $\tau_4$ is a right-descent block, and no block lies between them.

\end{example}
We classify all descent blocks into three types.
\begin{definition}\label{Def:3}
	Let $\tau=\raisebox{-8pt}{\young(ab,cd)}$ be a descent block. We define
	\begin{itemize}
		\item $\tau\in\mathcal{A}$ if $c\ne d$ and $\{c,d\}$ is the set of the smallest and largest integers among  $a,b,c,d$; that is,
		$d\ge b\ge a>c$, $d\ge a>b>c$, $c\ge b\ge a>d$, or $c\ge a>b>d$.
		\item $\tau\in\mathcal{B}$ if $a\ne b$ and $\{a,b\}$ is the set of the smallest and largest integers of $a,b,c,d$; that is, $a>c\ge d\ge b$, $a>d>c\ge b$, $b>c\ge d\ge a$, or $b>d>c\ge a$.
		\item $\tau\in\mathcal{C}$ if $a>d\ge b>c$, $d\ge a>c\ge b$, $b>c\ge a>d$, or $c\ge b>d\ge a$.
	\end{itemize}
\end{definition}

\begin{remark}
Equivalently, $\tau\in\A$ if $\maxi(c,d)\ge \maxi(a,b)$ and $\mini(c,d)<\mini(a,b)$; $\tau\in \B$ if
$\maxi(c,d)<\maxi(a,b)$ and $\mini(c,d)\ge \mini(a,b)$; $\tau\in \mathcal{C}$ if one of the following is true:
\begin{itemize}
	\item $\maxi(c,d)\ge \maxi(a,b)$, $\mini(c,d)\ge \mini(a,b)$ and $\chi(a>c)\ne \chi(b>d)$;
	\item $\maxi(c,d)<\maxi(a,b)$, $\mini(c,d)<\mini(a,b)$ and $\chi(a>c)\ne \chi(b>d)$.
\end{itemize}
This exhausts all possible total orderings of $a$, $b$, $c$ and $d$ when they are subject to the condition that $\chi(a>c)\ne \chi(b>d)$.
\end{remark}
\begin{remark}\label{Rem:1}
For $\tau\in\A \cup \B$, we have $\CMcal{Q}(a,c,d)=\CMcal{Q}(b,c,d)$, whereas for $\tau\in\mathcal{C}$, $\CMcal{Q}(a,c,d)\ne \CMcal{Q}(b,c,d)$. Following the notations in Example \ref{E5.1}, $\tau_1\in\mathcal{C}$, $\tau_3\in \A$ and $\tau_4\in \B$. 
\end{remark}

We define a map $\varepsilon$ that associates a descent block with a flip operator, as follows.

\begin{definition}\label{Def:varepsilon}
For any partition $\mu$ and a $\mu$-compatible index $i$, let $\sigma\in\T(\mu)$ 
and set $\sigma(\mu_j'+1,j)=0$ and $\sigma(0,j)=\infty$ for all $1\le j\le \mu_1$. Let $\tau=\raisebox{-8pt}{\young(ab,cd)}$ be a descent block of $\sigma$ such that $c=\sigma(r,i)$.

If $\tau\in\A$, then define $\varepsilon(\tau)=\rho_i^{r}$; otherwise define $\varepsilon(\tau)=\rho_i^{\kappa}$, where $\kappa$ is the smallest integer $x$ such that $x>r$ and
\begin{align}\label{E:kappa}
\CMcal{Q}(\sigma(x+1,i),\sigma(x,i),\sigma(x,i+1))=\CMcal{Q}(\sigma(x+1,i+1),\sigma(x,i),\sigma(x,i+1))=\chi(a\le c).
\end{align}
\end{definition}
Since $\rho_i^r$ is well-defined for any $r$, we only need to establish the existence of $\kappa$ for $\tau\not\in \A$ to show that $\varepsilon$ is well-defined on $\tau$. 
We say that a block $\rho$ is {\em above} another block $\tau$ if $\rho$ and $\tau$ are located in the same columns and the top row of $\rho$ is above the top row of $\tau$. 

\begin{lemma}\label{L:lemma10}
Following the notations of Definition \ref{Def:varepsilon}, the map $\varepsilon$ is well-defined on $\tau$ provided that one of (i) and (ii) holds: (i) all blocks above $\tau$ are neutral blocks; (ii) the neighbour of $\tau$ from above belongs to $\B$, or it is compatible with $\tau$.
\begin{enumerate}
\item Under conditions $(i)$ and $\tau\not\in \A$, we have $\sigma(j,i)>\sigma(j,i+1)$ for all $r<j\le \kappa$ when $a>c$, and $\sigma(j,i)<\sigma(j,i+1)$ when $a\le c$.
\item
Under condition $(ii)$, let $\rho$ denote the neighbour of $\tau$ from above. Then, the terminating row of $\varepsilon(\rho)$ is above the starting row of $\varepsilon(\tau)$.
\end{enumerate}
\end{lemma}
\begin{proof}
	Without loss of generality, we assume that $a>c$ and $b\le d$. It suffices to consider the case $\tau\in\B\cup \mathcal{C}$ for $(i)$ and $(ii)$. 
	For $(i)$, suppose that $\kappa$ does not exist for $\varepsilon(\tau)$. Then, at least one of $\CMcal{Q}(\sigma(j+1,i),\sigma(j,i),\sigma(j,i+1))$ and $\CMcal{Q}(\sigma(j+1,i+1),\sigma(j,i),\sigma(j,i+1))$ equals $1$ for all $j>r$. In this case, we show by induction on $j$ that $\sigma(j,i)>\sigma(j,i+1)$ for all $j>r$.
	
	 The base case $j=r+1$ holds, since $a>b$. If $a\le b$, then $c<a\le b\le d$, contradicting the assumption that $\tau\not\in\A$. Assuming that $\sigma(x,i)>\sigma(x,i+1)$ for all $r<x\le j-1$, we now prove the claim for $x=j$.
	
	We claim that exactly one of $\CMcal{Q}(\sigma(j,i),\sigma(j-1,i),\sigma(j-1,i+1))$ and $\CMcal{Q}(\sigma(j,i+1),\sigma(j-1,i),\sigma(j-1,i+1))$ equals $1$. Suppose otherwise; that is, both equal $1$. This implies $\sigma(j-1,i)\ge \sigma(j,i)>\sigma(j-1,i+1)$ and $\sigma(j-1,i)\ge \sigma(j,i+1)>\sigma(j-1,i+1)$. It follows that the block formed by $\sigma(j-1,i)$, $ \sigma(j,i)$, $ \sigma(j-1,i+1)$ and $\sigma(j,i+1)$ is a descent block, which contradicts condition $(i)$. This proves the claim.
	
	As a result, $\CMcal{Q}(\sigma(j,i),\sigma(j-1,i),\sigma(j-1,i+1))\ne \CMcal{Q}(\sigma(j,i+1),\sigma(j-1,i),\sigma(j-1,i+1))$. Since the block formed by $\sigma(j,i)$, $\sigma(j,i+1)$, $\sigma(j-1,i)$ and $\sigma(j-1,i+1)$ is a neutral block by $(i)$, and $\sigma(j-1,i)>\sigma(j-1,i+1)$ by the induction hypothesis, we conclude that $\sigma(j,i)>\sigma(j,i+1)$, completing the proof that $\sigma(j,i)>\sigma(j,i+1)$ for all $j>r$.
	
	Taking $j=\mu_i'$, we obtain $\sigma(\mu_i',i)>\sigma(\mu_i',i+1)$. However, $\CMcal{Q}(0,\sigma(\mu_i',i),\sigma(\mu_i',i+1))=0$, which implies that $\kappa=\mu_i'$, contradicting the assumed non-existence of $\kappa$. In consequence, $\kappa$ exists and $\varepsilon(\tau)$ is well-defined for $(i)$ when $a>c$. Similarly, one can prove the statement for $a\le c$, in which case
	$\sigma(j,i)<\sigma(j,i+1)$ for $r<j\le \kappa$. Thus, the proof of $(1)$ is complete.
		
	For $(ii)$, suppose that $\rho=\raisebox{-8pt}{\young(\alpha\beta,\gamma\delta)}$ is compatible with $\tau$ or $\rho\in \B$. Let row $k_1$ be the terminating row of $\varepsilon(\rho)$, and let row $k_2$ be the starting row of  $\varepsilon(\tau)$. We show that $k_2$ exists and that $k_1>k_2$. Suppose that $k_2$ does not exist. Then, $\gamma>\delta$ by $(1)$. If $\rho$ is compatible with $\tau$; that is, $\alpha>\gamma$ and $\beta\le \delta$, then $\alpha>\gamma>\delta\ge \beta$ in view of $\gamma>\delta$. Otherwise, $\alpha\le \gamma$, $\beta>\delta$ and $\rho\in \B$. By the definition of $\B$, we have $\beta>\gamma\ge \delta\ge \alpha$. In both cases,  we obtain $\CMcal{Q}(\alpha,\gamma,\delta)=\CMcal{Q}(\beta,\gamma,\delta)=0$, contradicting the assumption that $k_2$ does not exist. It follows that $k_2$ exists, and hence $\varepsilon(\tau)$ is well-defined. 
	
	We now discuss the type of $\tau$ to prove $k_1>k_2$. Let $c=\sigma(r,i)$ and $\gamma=\sigma(j,i)$, where  $j\ge r+1$ since $\rho$ is above $\tau$. 
	If $\tau\in\A$, then $k_2=r$ and $k_1\ge r+1$ as $\CMcal{Q}(a,c,d)=\CMcal{Q}(b,c,d)$. Thus, $k_1>k_2$.
	If $\tau\in \B\cup \mathcal{C}$, then $k_2>r$, and we claim that $k_2\le j$. Suppose otherwise; that is, $k_2>j$. Then, $\CMcal{Q}(\alpha,\gamma,\delta)=\CMcal{Q}(\beta,\gamma,\delta)=0$ by the preceding argument, contradicting the assumption that $k_2>j$. Therefore, we must have $k_2\le j$, and this further leads to $k_2<k_1$ by the definition of $k_1$. Consequently, statement (2) is proved.

	
\end{proof}
\begin{example}\label{Example:5.3}
	For the partition $\mu=(2^6)$, the $\mu$-compatible index $i=1$ and the filling $\sigma$ in Example  \ref{E5.1}, we have $\varepsilon(\tau_3)=\rho_1^{3}=t_1^{(3)}$, $\varepsilon(\tau_1)=\rho_1=t_1^{[5,6]}$ 
	and $\varepsilon(\tau_4)=\rho_1^4=t_1^{(4)}$ by Definition \ref{Def:varepsilon}. 
	
	The map $\varepsilon$ is well-defined on $\tau_1$ because no block is located above $\tau_1$, which is a special case of $(i)$. By Lemma \ref{L:lemma10}, $\tau_1$ has property $(1)$; namely, $\sigma(6,1)=8>6=\sigma(6,2)$.
	Since $\tau_3\in \A$, the map $\varepsilon$ is well-defined on $\tau_3$. Because $\tau_1$ is the compatible neighbour of $\tau_3$ from above,
	Lemma \ref{L:lemma10} (2) guarantees that the terminating row of $\tau_1$ (row $5$) must be above the starting row of $\tau_3$ (row $3$).	
		
	We remark that $(i)$ and $(ii)$ in Lemma \ref{L:lemma10} are sufficient but not necessary conditions for $\varepsilon$ to be well-defined. For instance,
	$\tau_4$ satisfies neither $(i)$ nor $(ii)$. Nevertheless, the  map $\varepsilon$ is well-defined on $\tau_4$.
\end{example}
We are now in a position to make significant use of the operators $\varepsilon(\tau)$ to establish Theorem \ref{thm3.4}.

{\em Proof of Theorem \ref{thm3.4}}.
Let $\sigma^i$ be $\sigma$ restricted to column $i$. The filling $\sigma^i\cup\sigma^{i+1}$ can be decomposed into a sequence of overlapping descent blocks and neutral blocks vertically, as defined in (\ref{E:odot}). The map $\phi_i$ is given as follows: For the trivial case where there is no descent block; that is, $\ndes(\sigma^{i+1})=\ndes(\sigma^i)$, we set $\phi_i(\sigma)=\sigma$, which clearly satisfies (\ref{E:majthm62})--(\ref{E:ndesthm62}).
	
	
	For the other non-trivial cases, we find all $j$ such that $\CMcal{Q}(\sigma(j,i),\sigma(j-1,i),\sigma(j-1,i+1))=\CMcal{Q}(\sigma(j,i+1),\sigma(j-1,i),\sigma(j-1,i+1))$, and we divide $\sigma^i\cup \sigma^{i+1}$
	into a sequence of components by drawing a line between rows $j$ and $j-1$ for each such $j$. 
	Let $\S_m$ be the $m$th component from top to bottom and let $k$ be the number of components of $\sigma^i\cup \sigma^{i+1}$. Then, $\sigma^i \cup \sigma^{i+1}$ is the disjoint union of the $\S_i$'s for $1\le i\le k$. See Example \ref{Example:5.4}.
	
	For any integer $m$ with $1\le m\le k$, suppose that $\S_m$ is the segment of $\sigma^i \cup \sigma^{i+1}$ from row $\kappa_1$ through row $\kappa_2$ with $\kappa_1\le \kappa_2$. Consider two blocks on the borders of $\S_m$, and let $\tau_2$ (resp. $\tau_1$) be the block of $\sigma^i\cup\sigma^{i+1}$ between rows $\kappa_2$ and $\kappa_2+1$ (resp. between rows $\kappa_1-1$ and $\kappa_1$). The classification of descent blocks in Definition \ref{Def:3} together with Remark \ref{Rem:1} indicates that the boundary blocks are neutral or descent blocks of type $\A$ or $\B$, whereas all blocks contained in $\S_m$ are neutral or descent blocks of type $\mathcal{C}$.

	We denote by $\T_m$ the set of descent blocks from row $\kappa_2+\chi(\tau_2\in\A)$ through row $\kappa_1-\chi(\tau_1\in\B)$, and let $\pi_m$ be the topmost descent block therein. Define $\phi_i(\sigma)$ to be the filling derived from $\sigma$ by applying $t_i^{[\kappa_1,\kappa_2]}$ for all $m$ whenever $|\T_m|$ is odd. See Example \ref{Example:5.4}. 
	
	We shall see  that $\phi_i$ is an involution with properties (\ref{E:majthm62})--(\ref{E:ndesthm62}). 
	First, we verify that 
	 \begin{align}\label{E:pim}
	 \varepsilon(\pi_m)=t_i^{[\kappa_1,\kappa_2]}. 
	 \end{align}
	 If $\pi_m\in\A$, then $\pi_m=\tau_2$ and $\varepsilon(\tau_2)=t_i^{[\kappa_1,\kappa_2]}$ by definition. Otherwise, if $\pi_m\in\B\cup\mathcal{C}$, then the terminating row of $\varepsilon(\pi_m)$ must be row $\kappa_1$. It remains to prove (\ref{E:kappa}) for $x=\kappa_2$. In this case, $\tau_2\not\in\A$, because otherwise $\pi_m=\tau_2\in\A$, which is a contradiction. As a result, $\tau_2$ is neutral or $\tau_2\in \B$ is the neighbour of $\pi_m$ from above. 
	 
	 Without loss of generality, we assume that $\pi_m$ is a left-descent block. If $\tau_2$ is neutral, then (1) of Lemma \ref{L:lemma10} says that $\sigma(\kappa_2,i)>\sigma(\kappa_2,i+1)$, thus confirming that $\CMcal{Q}(\sigma(\kappa_2+1,i),\sigma(\kappa_2,i),\sigma(\kappa_2,i+1))=\CMcal{Q}(\sigma(\kappa_2+1,i+1),\sigma(\kappa_2,i),\sigma(\kappa_2,i+1))=0$. It follows that the starting row of $\varepsilon(\pi_m)$ must be row $\kappa_2$. Otherwise, $\tau_2\in\B$ is the neighbour of $\pi_m$ from above. By (2) of Lemma \ref{L:lemma10}, the starting row of $\varepsilon(\pi_m)$ is below the terminating row $\kappa_2+1$ of $\varepsilon(\tau_2)$. On the other hand, the starting row of $\varepsilon(\pi_m)$ is above the bottom row of $\pi_m$ since $\pi\in \B\cup \mathcal{C}$. This implies that the starting row of $\varepsilon(\pi_m)$ must be row $\kappa_2$.  Consequently, (\ref{E:pim}) holds in both cases. 
	 
	 Notice that $\phi_i$ is a product of commuting $\varepsilon(\pi_m)$, because all components $\S_j$ are disjoint. Thus, each row of $\sigma^i\cup\sigma^{i+1}$ is either fixed or reversed exactly once by $\varepsilon(\pi_m)$ in (\ref{E:pim}). Next, we show that $\phi_i$ is an involution. 
	 
	 Let $\tau=\raisebox{-8pt}{\young(ab,cd)}$ be part of $\sigma^i\cup\sigma^{i+1}$ on the border between two components; that is, $\raisebox{-2pt}{\young(ab)}$ is the bottom row of a component, and $\raisebox{-2pt}{\young(cd)}$ is the top row of its neighbouring component. 
	 By construction, $\CMcal{Q}(a,c,d)=\CMcal{Q}(b,c,d)$, which is trivially preserved if row  $\raisebox{-2pt}{\young(cd)}$ is fixed by $\phi_i$. If row $\raisebox{-2pt}{\young(cd)}$ is reversed by $\phi_i$, then we obtain $\CMcal{Q}(a,d,c)=\CMcal{Q}(b,d,c)$, meaning that the ways of dividing $\sigma^i\cup\sigma^{i+1}$ into components are the same before and after $\phi_i$. Furthermore, the type of the descent blocks is maintained by $\phi_i$. That is, if $\tau$ is a descent block, then $\tau\in\mathcal{X}$ if and only if $\phi_i(\tau)\in\mathcal{X}$ for $\mathcal{X}\in\{\A,\B,\mathcal{C}\}$. This follows directly from Definition \ref{Def:3} and the construction of $\phi_i$. Thus, $\kappa_1$, $\kappa_2$, $\T_m$ and $\pi_m$ are invariant under $\phi_i$. Together with (\ref{E:pim}), we conclude that $\phi_i$ is an involution.
	
     Finally, we establish (\ref{E:majthm62})--(\ref{E:ndesthm62}). Note that (\ref{E:majthm62}) is a direct consequence of (\ref{E:maj1}). For $1\le m\le k$, let $\varepsilon(\pi_m)(\sigma)=\pi$ when $|\T_m|$ is odd.
     We claim that
     \begin{align}\label{E:phi1}
     \quinv(\pi)=\quinv(\sigma)+\ndes(\pi^i)-\ndes(\sigma^i).
     \end{align}
     For even $|\T_m|$, there is an equal number of left- and right-descent blocks of $\T_m$.
     The descent blocks of $\T_m$ {\em alternate} from top to bottom in the sense that every two neighbours are incompatible. In other words, if $\R$ (resp. $\CMcal{L}$) denotes a right-descent (resp. left-descent) block, then
     the sequence of descent blocks in $\T_m$ from top to bottom is $(\ldots, \R,\CMcal{L},\R,\ldots)$.
     This holds because $\varepsilon(\pi_m)$ terminates between two compatible neighbours by Lemma \ref{L:lemma10} $(2)$. We further assert that every left (resp. right) descent block of $\T_m$ becomes a right (resp. left) descent block after applying $\varepsilon(\pi_m)$. 
     
     Let $\tau=\raisebox{-8pt}{\young(ab,cd)}$ be a descent block of $\T_m$. If $\varepsilon(\pi_m)$ swaps the entries of both rows of $\tau$, then the assertion is evidently true. Otherwise, if $\varepsilon(\pi_m)$ only swaps the entries $a$ and $b$, we have $\tau=\tau_1\in\B$. 
     In this case, $\raisebox{-8pt}{\young(ba,cd)}$ is incompatible with $\tau$ by the definition of $\B$. If $\varepsilon(\pi_m)$ swaps only the entries $c$ and $d$, then, $\tau=\tau_2\in\A$ and $\raisebox{-8pt}{\young(ab,dc)}$ is also incompatible with $\tau$ by the definition of $\A$. This proves the assertion.
     
     This results in that the sequence $(\ldots, \R,\CMcal{L},\R,\ldots)$ 
     of descent blocks in $\T_m$ is transformed into  $(\ldots,\CMcal{L},\R,\CMcal{L},\ldots)$ by $\varepsilon(\pi_m)$. 
     Similarly, if $\tau$ is a neutral block with two non-descent (resp. descent) columns, then one can show that $\varepsilon(\pi_m)(\tau)$ is also a neutral block with two non-descent (resp. descent) columns. It follows that only the descent blocks can affect the number of non-descents in each column after $\varepsilon(\pi_m)$ is applied.  
     
     Since $\vert \T_m\vert$ is odd, the number of non-descents in column $i$ increases (resp. decreases) by one if and only if $\pi_m$ is a left-descent (resp. right-descent) block. Equivalently, the number of queue inversion triples increases (resp. decreases) by one, according to (\ref{E:quinv1})--(\ref{E:quinv2}) of Lemmas \ref{L:2} and \ref{L:lemma10} (1). That is,
     \begin{align}
      \label{E:qinvdif1}\quinv(\pi)-\quinv(\sigma)&=\chi(\pi_m\textrm{ is left descent})-\chi(\pi_m\textrm{ is right descent}),\\
     \label{E:qinvdif}&=\ndes(\pi^i)-\ndes(\sigma^i).
     \end{align}
     If $|\T_m|$ is even, then the numbers of left- and right-descent blocks of $\T_m$ are equal. Consequently, the numbers of non-descents in columns $i$ and $i+1$ are interchanged by $\phi_i$; that is, (\ref{E:ndesthm62}). In combination with (\ref{E:qinvdif}), we obtain
     \begin{align*}
     \quinv(\phi_i(\sigma))-\quinv(\sigma)=\ndes(\sigma^{i+1})-\ndes(\sigma^{i})=x_{i+1}-x_i.
     \end{align*}
    This justifies (\ref{E:quinvthm62}) for $\nu=\quinv$. What remains to be proved is (\ref{E:quinvthm62}) for $\nu=\inv$, which by (\ref{E:qinvdif1}) is equivalent to showing that
      \begin{align}\label{E:invdif}
      \inv(\pi)-\inv(\sigma)
      =\chi(\pi_m\textrm{ is left descent})-\chi(\pi_m\textrm{ is right descent})
      \end{align}
      for odd $|\T_m|$. Let $\alpha_m$ be the bottommost descent block of $\T_m$. Since $|\T_m|$ is odd and the descent blocks of $\T_m$ alternate,  blocks $\pi_m$ and $\alpha_m$ must be compatible. Consequently, (\ref{E:invdif}) can be rewritten as 
      \begin{align}\label{E:invdif2}
      \inv(\pi)-\inv(\sigma)
      =\chi(\alpha_m\textrm{ is left descent})-\chi(\alpha_m\textrm{ is right descent}).
      \end{align}
      Here, $\alpha_m\in\B\cup\mathcal{C}$, because if $\alpha_m\in\A$, then $\T_m$ would not contain $\alpha_m$ by definition. If $\alpha_m\in\B$; that is, $\tau_1=\alpha_m$, then (\ref{E:inv11}) and (\ref{E:inv12}) of Lemma \ref{L:2} guarantee (\ref{E:invdif2}). 
      Otherwise, $\alpha_m\in\mathcal{C}$. We claim that
      \begin{align}\label{E:sigma1}
      \sigma(\kappa_1,i)<\sigma(\kappa_1,i+1),
      \end{align}
      if $\alpha_m$ is left descent; otherwise,
      \begin{align}\label{E:sigma2}
      \sigma(\kappa_1,i)>\sigma(\kappa_1,i+1).
      \end{align}
      This can be derived in the same manner as in (1) of Lemma \ref{L:lemma10}, and hence we omit the details. On the other hand, $\tau_1\not\in \B\cup \mathcal{C}$, 
      because otherwise $\alpha_m=\tau_1\in \B$ or row $\kappa_1$ would not be the terminating row of $\varepsilon(\pi_m)$. Equivalently, $\tau_1$ is neutral or $\tau_1\in \A$. Together with  (\ref{E:sigma1})--(\ref{E:sigma2}), we are led to (\ref{E:invdif2}) again by (\ref{E:inv11})--(\ref{E:inv12}) of Lemma \ref{L:2}. This gives (\ref{E:quinvthm62}) for $\nu=\inv$, thus completing the proof.
\qed
\begin{remark}\label{Rem:6}
The type of any descent block $\raisebox{-8pt}{\young(ab,cd)}$ remains unchanged under the bijection $\phi_i$ because it is invariant under the swap of $a$ and $b$, or $c$ and $d$, according to Definition \ref{Def:3}. 
\end{remark}

\begin{example}\label{Example:5.4}
Let $\sigma$ be the filling in Example \ref{E5.1}. Since $\CMcal{Q}(4,6,7)=\CMcal{Q}(6,6,7)$, $\CMcal{Q}(6,3,8)=\CMcal{Q}(7,3,8)$ and $\CMcal{Q}(3,6,5)=\CMcal{Q}(8,6,5)$, 
we draw the bold lines below to divide $\sigma$ into disjoint components 
\begin{align*}
	\S_1=\raisebox{-8pt}{\young(86,46)}\,,\,\,\S_2=\raisebox{-2pt}{\young(67)}\,,\,\,\S_3=\raisebox{-2pt}{\young(38)}\,,\,\,\S_4=\raisebox{-8pt}{\young(65,52)}\,.
\end{align*}
Then, $\T_1=\left\{\raisebox{-8pt}{\young(86,46)}\right\}$, $\T_3=\left\{\raisebox{-8pt}{\young(67,38)},\raisebox{-8pt}{\young(38,65)}\right\}$ and $\T_2=\T_4=\varnothing$. 
Since $\T_1$ is the only set with odd cardinality, we apply $t_1^{[5,6]}$ to $\sigma$; that is, $\phi_1(\sigma)=t_1^{[5,6]}(\sigma)$.
\begin{align*}
	\sigma=\begin{ytableau}[] *(green)8 & *(green)6 \\
		*(green)4 & *(green)6 \\
		\hline
		6 & 7 \\
		\hline
		3 & 8 \\
		\hline
		6 & 5 \\
		5 & 2
	\end{ytableau}
	\rightarrow
	\begin{ytableau}[]*(green)6 & *(green)8 \\
		*(green)6 & *(green)4 \\
		\hline
		6 & 7 \\
		\hline
		3 & 8 \\
		\hline
		6 & 5 \\
		5 & 2
	\end{ytableau}=\phi_1(\sigma)\,.
\end{align*}
It is straightforward to check (\ref{E:majthm62})--(\ref{E:ndesthm62}). That is, 
\begin{align*}
\maj(\phi_1(\sigma))&=\maj(\sigma)=18,\\
{N}\des(\sigma)&=(x_1,x_2)=(2,3)=s_1{N}\des(\phi_1(\sigma)),\\
\nu(\phi_1(\sigma))&=\nu(\sigma)+x_2-x_1=\nu(\sigma)+1
\end{align*}
for $\nu\in\{\inv,\quinv\}$. The two descent blocks of $\T_3$ alternate, as claimed in the proof of Theorem \ref{thm3.4}. That is, $\raisebox{-8pt}{\young(67,38)}$ is a left-descent block, whereas $\raisebox{-8pt}{\young(38,65)}$ is a right-descent block.
\end{example}

	

\section{Proofs of Theorems \ref{T:3} and \ref{pro1}} \label{S:6}
The primary purpose of this section is to complete the proof of Theorem \ref{T:3}.
First, we present Theorem \ref{thm8}, which introduces the second bijection $\theta$ towards the desired bijection $\varphi$ for Theorem \ref{T:3}.
Second, we discuss how Theorems \ref{pro1} and \ref{thm8} together yield Theorem \ref{T:3}. Finally, we confirm Theorem \ref{pro1}.

\begin{theorem}[Second part of Theorem \ref{T:3}]\label{thm8}
    If the Young diagram of $\mu$ is a rectangle, then
 there exists a bijection $\theta:\T(\mu)\rightarrow \T(\mu)$ such that $\sigma\sim \theta(\sigma)$ and  $(\inv,\quinv,\maj)\sigma=(\quinv,\inv,\maj)\theta(\sigma)$.
\end{theorem}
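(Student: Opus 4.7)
The plan is to build $\theta$ as the composition of the reverse operator with a canonical product of the involutions $\phi_i$ provided by Theorem~\ref{thm3.4}, choreographed so that the net shift in $(\inv,\quinv)$ exactly cancels the discrepancy detected in Lemma~\ref{L:1}. The fact that (\ref{E:quinvthm62}) moves $\inv$ and $\quinv$ by the \emph{same} amount is precisely why a single correction can simultaneously send $\inv(\sigma^r)$ back to $\quinv(\sigma)$ and $\quinv(\sigma^r)$ back to $\inv(\sigma)$.

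Fix $\lambda=(n^m)$ and $\sigma\in\T(\lambda)$ with $\CMcal{N}\des(\sigma)=(x_1,\ldots,x_n)$, and set
\[
\Delta:=\sum_{i=1}^n x_i(n-2i+1).
\]
I would first verify that $\sigma\mapsto \sigma^r$ preserves $\maj$ and reverses $\CMcal{N}\des$: descents in column $i$ of $\sigma$ correspond to descents in column $n+1-i$ of $\sigma^r$, and since all cells in a given row of a rectangle share the same leg, the contribution to $\maj$ is unchanged; the same observation yields $\CMcal{N}\des(\sigma^r)=(x_n,\ldots,x_1)$. Lemma~\ref{L:1} then supplies
\[
\inv(\sigma^r)=\quinv(\sigma)-\Delta,\qquad \quinv(\sigma^r)=\inv(\sigma)-\Delta.
\]

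Now fix any reduced word $(i_1,\ldots,i_N)$, with $N=\binom{n}{2}$, for the longest element $w_0\in S_n$. Because $\lambda$ is rectangular, every $i\in\{1,\ldots,n-1\}$ is $\lambda$-compatible, so each involution $\phi_{i_t}$ is defined on $\T(\lambda)$. Set
\[
\theta(\sigma):=\bigl(\phi_{i_N}\circ\phi_{i_{N-1}}\circ\cdots\circ\phi_{i_1}\bigr)(\sigma^r).
\]
As a composition of bijections, $\theta$ is a bijection on $\T(\lambda)$. By (\ref{E:majthm62}) the major index is preserved at every step, giving $\maj(\theta(\sigma))=\maj(\sigma)$. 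Iterating (\ref{E:ndesthm62}), the string of adjacent transpositions $s_{i_1},\ldots,s_{i_N}$ applied to $(x_n,\ldots,x_1)$ realizes $w_0$ and unwinds the sequence back to $(x_1,\ldots,x_n)$.

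The heart of the argument is tallying the accumulated change in $\inv$ and $\quinv$ under the $\phi_{i_t}$'s. By (\ref{E:quinvthm62}) the two statistics pick up exactly the same increment at every step, so one only needs to compute that common total. In any reduced decomposition of $w_0$ acting on positions, each pair $(x_j,x_k)$ with $j<k$ is swapped exactly once; at the moment of its swap, $x_k$ occupies the left slot and $x_j$ the right one (they begin in that order in $(x_n,\ldots,x_1)$, end in the opposite order, and cross exactly once). By (\ref{E:quinvthm62}) this step contributes $x_j-x_k$ to the running total, whence
\[
\inv(\theta(\sigma))-\inv(\sigma^r)=\sum_{1\le j<k\le n}(x_j-x_k)=\sum_{i=1}^n x_i(n-2i+1)=\Delta,
\]
and likewise for $\quinv$. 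Adding $\Delta$ to the values from Lemma~\ref{L:1} produces $\inv(\theta(\sigma))=\quinv(\sigma)$ and $\quinv(\theta(\sigma))=\inv(\sigma)$, completing the verification.

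The main obstacle is precisely the telescoping in the preceding paragraph: one must simultaneously monitor how the permutation of $\CMcal{N}\des$ evolves step-by-step and which increments $(x_{i_t+1}^{(t-1)}-x_{i_t}^{(t-1)})$ accrue, and confirm that these sum to $\Delta$ independently of the reduced word selected. Once that pair-by-pair accounting is secured, the symmetry of (\ref{E:quinvthm62}) between $\inv$ and $\quinv$ delivers the swap property essentially for free.
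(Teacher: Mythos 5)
Your proof is correct and follows essentially the same route as the paper: apply the reverse operator, then a product of the involutions $\phi_i$ that restores $\CMcal{N}\des$ to $(x_1,\ldots,x_n)$, and observe that the accumulated shift $\sum_i x_i(n-2i+1)$ in both $\inv$ and $\quinv$ exactly matches the discrepancy in Lemma~\ref{L:1}. The only (harmless) difference is that you allow an arbitrary reduced word for $w_0$ and justify the total via the pair-crossing argument, whereas the paper fixes one particular sorting order of the $\phi_i$'s.
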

\begin{proof}
	For a partition $\mu=(n^m)$ and a filling $\sigma\in\T(\mu)$, let $x_i$ be the number of non-descents in column $i$ of $\sigma$. Then, ${N}\des(\rev(\sigma))=(x_n,\ldots,x_1)$. Define a bijection $\theta:\T(\mu)\rightarrow \T(\mu)$ as a product of the $\phi_i$'s as follows: 
	In the first step, we apply the bijections $\phi_i$ for $i$ from $n-1$ to $1$ to $\rev(\sigma)$ and let $\tau_1=\phi_1\circ\cdots\circ\phi_{n-1}(\rev(\sigma))$. By Theorem \ref{thm3.4}, the number of queue inversion triples increases by
	\begin{align*}
	\nu(\tau_1)-\nu(\rev(\sigma))=\sum_{i=2}^{n}(x_1-x_i)=(n-1)x_1-\sum_{i=2}^{n}x_i
	\end{align*}
	for $\nu\in\{\inv,\quinv\}$ and ${N}\des(\tau_1)=(x_1,x_n,\ldots,x_2)$. In the next step, we apply the bijections $\phi_i$ for $i$ from $n-1$ to $2$ on $\tau_1$ and let $\tau_2=\phi_2\circ\cdots\circ\phi_{n-1}(\tau_1)$, yielding that the number of queue inversion triples increases by $(n-2)x_2-(x_3+\cdots+x_n)$ and ${N}\des(\tau_2)=(x_1,x_2,x_n,\ldots,x_3)$.
	We continue this process until we generate $\tau_{n-1}$ satisfying ${N}\des(\tau_{n-1})=(x_1,x_2,\ldots,x_n)$. Define $\theta(\sigma)=\tau_{n-1}$. From (\ref{E:majthm62}), we have $\maj(\theta(\sigma))=\maj(\sigma)$. Furthermore,
	\begin{align*}
	\nu(\theta(\sigma))&=\nu(\rev(\sigma))+\sum_{i=1}^{n-1}(n-i)x_i-\sum_{i=2}^{n}(i-1)x_i \\
	&=\nu(\rev(\sigma))+\sum_{i=1}^{n}x_i(n-2i+1).
	\end{align*}
	Comparing with (\ref{eq5}), we conclude that
	$(\inv,\quinv)(\sigma)=(\quinv,\inv)(\theta(\sigma))$, as claimed.
\end{proof}
\begin{remark}\label{Tab:counterexample}
	In the table below, we provide a row-equivalency class of a non-rectangular diagram to disprove (\ref{E:T2}) for arbitrary fillings.
	\begin{table}[H]
		\begin{adjustwidth}{-0cm}{0cm}
			\renewcommand\arraystretch{1.8}
			\begin{center}
				\resizebox{0.7\textwidth}{!}{
					\begin{tabular}{c| c c c c c c}%
						
						$[\sigma]$ & $\young(3,412,333)$ & $\young(3,421,333)$  &  $\young(3,124,333)$  &  $\young(3,142,333)$ & $\young(3,241,333)$& $\young(3,214,333)$\\ \hline
						
						$\maj$   & 2   &  2  &  2  & 2 &2 &2  \\ 
						$\inv$   & 0   &  1  &  2  & 1 &2 &3  \\ 

						$\quinv$  &3   & 2  & 2  & 1 &0&1  \\ 
						
				\end{tabular}}
				\label{T:disp}
			\end{center}
		\end{adjustwidth}
	\end{table}
\end{remark}

We proceed by establishing Theorem \ref{T:3} with the help of Theorems \ref{pro1} and \ref{thm8}.

{\em Proof of Theorem \ref{T:3}}.
For any $\sigma\in\T(\mu)$, let $\tau=\gamma(\sigma)$. Consider the rectangular decomposition of $\tau=\tau_1\sqcup\cdots \sqcup\tau_p$, and define $\pi=\theta(\rev(\tau_1))\sqcup\cdots\sqcup \theta(\rev(\tau_p))$.  We shall see that $\varphi(\sigma)=\pi$ satisfying $\pi\sim \sigma$,  $\quinv(\pi)=\inv(\sigma)$ and $\maj(\pi)=\maj(\sigma)$. Since the number of queue inversion triples or inversion triples located in two columns of unequal heights is invariant under $\rho_i^r$ by Lemma \ref{L:2}, Theorem \ref{thm8} ensures that
\begin{align*}
\quinv(\pi)-\quinv(\tau)&=\sum_{i=1}^{p}(\quinv(\theta(\rev(\tau_i)))-\quinv(\tau_i))\\
&=\sum_{i=1}^{p}(\inv(\rev(\tau_i))-\quinv(\tau_i))=-\kappa(\tau).
\end{align*}
Combining with (\ref{eqthm3.6b}) of Theorem \ref{pro1}, we conclude that $\quinv(\pi)=\inv(\sigma)$. Furthermore, $\pi\sim \sigma$ and $\maj(\pi)=\maj(\sigma)$ are direct consequences of Theorems \ref{pro1} and \ref{thm8}, as desired.

\qed

We now focus on proving Theorem \ref{pro1}.
The following lemma treats special cases where $a$ is the unique smallest integer among the entries $a,b,c$ and $d$ of a square, and it plays an essential role in deriving (\ref{eqthm3.6b}) of Theorem \ref{pro1}.
\begin{lemma}\label{lem3.3}
	For a partition $\mu=(n^m)$, let $\sigma\in\T(\mu)$ and let $a,b,c,d$ be $\sigma(m,i)$, $\sigma(m,i+1)$, $\sigma(m-1,i)$ and $\sigma(m-1,i+1)$, respectively. Set $\sigma(0,j)=\infty$ and $\sigma(m+1,j)=0$ for $1\le j\le n$. If $a<d<b\le c$, then
	\begin{align*}
	\quinv(\sigma)+(n-i-1)=\quinv(t_{i}(\sigma))\quad\mbox{ and }\quad
	\maj(\sigma)-1=\maj(t_{i}(\sigma)).
	\end{align*}
	If $a<c<b\le d$, then
	\begin{align*}
	\quinv(\sigma)-(n-i+1)=\quinv(t_{i}(\sigma))\quad\mbox{ and }\quad
	\maj(\sigma)+1=\maj(t_{i}(\sigma)).
	\end{align*}
\end{lemma}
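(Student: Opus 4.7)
The plan is to compute directly the effect of $t_i = t_i^{(m)}$, which swaps only the two top-row entries $a = \sigma(m, i)$ and $b = \sigma(m, i+1)$ while leaving $c = \sigma(m-1, i)$, $d = \sigma(m-1, i+1)$ and every other entry intact. I will enumerate each descent and each queue inversion triple of $\hat{\sigma}$ that can possibly change and sum the contributions.

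The change in $\maj$ is immediate: only descents between rows $m$ and $m-1$ in columns $i$ and $i+1$ can be affected, and each such descent carries $\mathsf{leg}=0$ and so contributes $1$. In Case 1 ($a < d < b \le c$), the pair $(b, d)$ is the only descent of $\sigma$ among $\{(a,c),(b,d)\}$, while both $(b, c)$ and $(a, d)$ are non-descents of $t_i(\sigma)$ (since $b \le c$ and $a < d$); hence $\maj(t_i\sigma) = \maj(\sigma) - 1$. In Case 2 ($a < c < b \le d$) neither $(a, c)$ nor $(b, d)$ is a descent of $\sigma$, but $(b, c)$ becomes a descent of $t_i(\sigma)$, so $\maj(t_i\sigma) = \maj(\sigma) + 1$.

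For $\quinv$, a triple of $\hat\sigma$ can change only if it contains $a$ or $b$, and then it must belong to one of two families:
\begin{itemize}
\item[(R)] triples $(0, \sigma(m, k), \sigma(m, j))$ with $k < j$ and $0 = \hat\sigma(m+1, k)$;
\item[(Q)] triples $(\sigma(m, k), \sigma(m-1, k), \sigma(m-1, j))$ with $k \in \{i, i+1\}$ and $k < j$.
\end{itemize}
In family (R), the contributions at $(i, j)$ and $(i+1, j)$ cancel for each $j > i+1$, and similarly at $(k, i)$ and $(k, i+1)$ for each $k < i$; the only surviving term is the pair $(i, i+1)$, contributing $\chi(b < a) - \chi(a < b) = -1$ in both cases, since $a < b$. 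In family (Q), the pair $(i, i+1)$ contributes $\Delta_1 := \CMcal{Q}(b, c, d) - \CMcal{Q}(a, c, d)$, while for each $j > i+1$ the two triples with $k = i$ and $k = i+1$ together contribute
\[\Delta_2(z) := \bigl(\CMcal{Q}(b, c, z) - \CMcal{Q}(a, c, z)\bigr) + \bigl(\CMcal{Q}(a, d, z) - \CMcal{Q}(b, d, z)\bigr), \qquad z = \sigma(m-1, j).\]
The crux of the proof is the claim that $\Delta_1$ and $\Delta_2(z)$ both equal $+1$ uniformly in Case 1, and $-1$ uniformly in Case 2. Assuming this, the total change is $-1 + \Delta_1 + (n - i - 1)\,\Delta_2$, which evaluates to $n - i - 1$ in Case 1 and to $-(n - i + 1)$ in Case 2, as required.

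The main obstacle is the uniform evaluation of $\Delta_2(z)$. I would split $z$ into the four intervals $(-\infty, a)$, $[a, b)$, $[b, \max(c, d)]$, $(\max(c, d), \infty)$, paying attention to the boundary values $z \in \{a, b, c, d\}$ where the $a = b \ne c$ clause of $\CMcal{Q}$ becomes relevant. Using the orderings $a < d < b \le c$ (resp.\ $a < c < b \le d$), the four $\CMcal{Q}$-values can be read off in each subcase, and the alternating sum collapses to the claimed constant in every region; the verification of $\Delta_1$ is then obtained by substituting $z = d$ into the first summand of $\Delta_2$. This table-style verification is in the same spirit as Table \ref{Tab:1} in the proof of Lemma \ref{L:2}; the uniformity of $\Delta_2(z)$ across all ranges of $z$ ultimately reflects the hypothesis that $a$ is the strict minimum of $\{a, b, c, d\}$, which is what aligns the cancellations consistently.
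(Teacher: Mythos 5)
Your proposal is correct and follows essentially the same route as the paper's proof: the paper likewise observes that the only net contributions come from the triple $(0,a,b)$ (giving $-1$), the triple on $(c,d)$ (your $\Delta_1$), and, for each column $j>i+1$, the identity $\CMcal{Q}(a,c,z)+\CMcal{Q}(b,d,z)+1=\CMcal{Q}(a,d,z)+\CMcal{Q}(b,c,z)$ (your $\Delta_2(z)=+1$), verified by a table over the possible positions of $z$ exactly as you propose, with the second case treated symmetrically. Your explicit cancellation of the remaining $(0,\cdot,\cdot)$ triples is a minor point the paper leaves implicit, but the argument is the same.
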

\begin{proof}
	For $c\geq b>d>a$, it is clear that the major index is reduced by one after switching $a$ and $b$. Furthermore, we have $\CMcal{Q}(0,a,b)=\CMcal{Q}(b,c,d)=1$, whereas $\CMcal{Q}(0,b,a)=\CMcal{Q}(a,c,d)=0$, which implies that the number of queue inversion triples between columns $i$ and $i+1$ is invariant after the transposition of $a$ and $b$.  Second, we consider the number of queue inversion triples induced by columns $i$
	or $i+1$ and column $j$ for all $j>i+1$. There are five possible ways to insert $z$, as outlined in Table \ref{Tab:2}.
	\begin{table}
		\begin{center}
			\begin{tabular}{ccccccc}
				&  & $(\CMcal{Q}(a,c,z),\CMcal{Q}(b,d,z))$ &$(\CMcal{Q}(b,c,z),\CMcal{Q}(a,d,z))$\\
				\hline \\[-1.5ex]
				$c\geq b>d>a>z$  &  & $(1,0)$ & $(1,1)$\\[1ex]
				\hline \\[-1.5ex]
				$c\geq b>d\geq z\geq a$ &  & $(0,0)$ & $(1,0)$ \\[1ex]
				\hline \\[-1.5ex]
				$c\geq b>z>d>a$  &  & $(0,1)$ & $(1,1)$ \\[1ex]
				\hline \\[-1.5ex]
				$c\geq z\geq b>d>a$  & & $(0,0)$ & $(0,1)$ \\[1ex]
				\hline \\[-1.5ex]
				$z>c\geq b>d>a$  & & $(1,0)$ & $(1,1)$ \\[1ex]
			\end{tabular}
			\caption{Change in queue inversion triples when $c\geq b>d>a$. }\label{Tab:2}
		\end{center}
	\end{table}
	Evidently, $\CMcal{Q}(a,c,z)+\CMcal{Q}(b,d,z)+1=\CMcal{Q}(a,d,z)+\CMcal{Q}(b,c,z)$ in all cases. As $z$ is an arbitrary element in row $m-1$ and column $j$ for $i+1<j\le n$, the change from $\raisebox{-2pt}{\young(ab)}$ to $\raisebox{-2pt}{\young(ba)}$ increases the number of queue inversion triples by $n-i-1$, as desired. The proof for the case when $d\ge b>c>a$ is similar, and hence we omit the details.
\end{proof}

We are now ready to prove Theorem \ref{pro1}. In Example \ref{Exam:3}, we illustrate how the bijections $\gamma$ and $\varphi$ are applied to a given filling.

{\em Proof of Theorem \ref{pro1}}.
	We prove the statement by induction on the number of rows. In the base case, if $\mu$
	has only one row, we simply take $\gamma(\sigma)=\rev(\sigma)$ and $\kappa(\rev(\sigma))=0$. For the partition $\mu$ with at least two rows, let $\lambda$ be the partition obtained by removing the top row of $\mu$. Assume that $\gamma: \T(\lambda)\rightarrow \T(\lambda)$ is a bijection such that $\gamma(\tau)$ has the desired properties stated in Theorem \ref{pro1}.
	
	For $\sigma\in\T(\mu)$, let the topmost and the second-topmost rows of $\sigma$ be $b=(b_1,\ldots, b_s)$ and $c=(c_1,\ldots, c_n)$, respectively, where $s\leq n$. Let $\tau$ be obtained from $\sigma$ by removing the top row. Then, by the induction hypothesis, the top row of $\gamma(\tau)$ is $\rev(c)=(c_n,\ldots,c_1)$. Define the map $\gamma: \T(\mu)\rightarrow \T(\mu)$  recursively as follows:
	
	If $s=n$, then let $\gamma(\sigma)$ be the filling obtained by reversing $b$ and placing it on top of $\gamma(\tau)$. 
	
	
	If $s<n$, then let $\alpha$ be the filling obtained by adding the sequence $(0,\ldots,0,b_s,\ldots,b_1)$ with $n-s$ zeros, to the top of $\gamma(\tau)$. We then move all the zeros to the right of $\rev(b)=(b_s,\ldots,b_1)$ by the following procedure, starting with the filling $\alpha$ and its rightmost zero:
	\begin{enumerate}
		\item[(1)] If $b_s>c_s$, apply $\phi_{n-s}$ to $\alpha$. If $0$ and $b_s$ are swapped by $\phi_{n-s}$, stop; otherwise, continue applying $t_{n-s}$ to $\phi_{n-s}(\alpha)$ to switch $0$ and $b_s$.
		
		\item[(2)]If $b_s\leq c_s$, apply $\rho_{n-s}\circ\phi_{n-s}$ to $\alpha$. If $0$ and $b_s$ are swapped by $\phi_{n-s}$, continue by performing $t_{n-s}$ on $\rho_{n-s}\circ\phi_{n-s}(\alpha)$; otherwise, stop.
	\end{enumerate}
	
	Since $0$ and $b_s$ are switched by $t_{n-s}$ or $\rho_{n-s}$, the above process produces a new filling in which the rightmost zero of the top row is moved to column $(n-s+1)$. The procedure for this new filling and its rightmost
	zero is then repeated until all the zeros in the top row have been transported to the end. These zeros are removed and the resulting filling is denoted by $\gamma(\sigma)$. In both cases ($s=n$ and $s<n$),
	 the topmost rows of $\sigma$ and $\gamma(\sigma)$ are the reverses of each other. 
	
	We now prove that (\ref{eqthm3.6b})--(\ref{eqthm3.6d}) are satisfied by $\gamma(\sigma)$. For simplicity, we write $\ell(\lambda)=p$, and
	let $m$ be the number of non-descents between rows $p$ and $p+1$ of $\sigma$. That is,
	\begin{align*}
		m=\ndes(\sigma\vert_p^{p+1})=\sum_{i=1}^{s}\chi(b_i\le c_i).
	\end{align*}
	Let $(x_n,\ldots,x_1)$ be the numbers of non-descents in the first $n$ columns of $\alpha$ from left to right. Then the number of non-descents in the first $n$ columns of $\gamma(\tau)$ is $(x_n-1,\ldots,x_{s+1}-1, x_{s}-\chi(b_s\leq c_s),\ldots,x_{1}-\chi(b_1\leq c_1))$. Consequently, the number of non-descents in the first $n$ columns of $\sigma$ is 
	$(x_1,\ldots,x_s, x_{s+1}-1,\ldots,x_{n}-1)$. By definition and the induction hypothesis, 
	\begin{align}\label{eq1}
	\maj(\alpha)-\maj(\sigma)=\sum_{i=s+1}^{n}(p-1-(x_i-1))=\sum_{i=s+1}^{n}(p-x_i).
	\end{align}
	On the other hand, when we add $(n-s)$ zeros to the right of the top row $b=(b_1,\ldots, b_s)$ of $\sigma$, the number of inversion triples increases by $m(n-s)$ since $\CMcal{Q}(b_i,c_i,0)=1$ is equivalent to $b_i\le c_i$. Therefore, \eqref{eq5} gives
	\begin{align}\label{eq7}
	\quinv(\alpha|_{p}^{p+1})&= \inv(\sigma|_{p}^{p+1})+m(n-s)+\sum_{i=1}^{s}\chi(b_i\leq c_i)(2i-n-1)+\sum_{i=s+1}^{n}(2i-n-1).
	\end{align}	
	Let ${\Omega_s=\ytableausetup
	{mathmode,boxframe=normal,boxsize=1.7em}
	\raisebox{3pt}{\begin{ytableau}[] 0&b_s \\  \scriptstyle c_{s+1} & c_{s} \end{ytableau}}}$ 
be a block of $\alpha$. Then, $\Omega_s\not\in \A$.
We analyze the change of the statistics $\quinv$, ${N}\des$ and $\maj$ by a close inspection of steps (1) and (2).
	
	Case (I): If $b_s>c_s$ and $\phi_{n-s}$ swaps $0$ and $b_s$ of $\alpha$, then (\ref{E:majthm62}), (\ref{E:quinvthm62}) and (\ref{E:ndesthm62}) state that
	\begin{align*}
	\quinv(\phi_{n-s}(\alpha))&=\quinv(\alpha)+x_{s}-x_{s+1},\\
	{N}\des(\phi_{n-s}(\alpha))&=(\ldots,x_s,x_{s+1},\ldots),\\
	\maj(\phi_{n-s}(\alpha))&=\maj(\alpha).
	\end{align*}

    Case (II): If $b_s>c_s$ and $\phi_{n-s}$ fixes the entries $0$ and $b_s$ of $\alpha$, we claim that
    $\phi_{n-s}$ also fixes the entries $c_{s+1}$ and $c_s$ in the second topmost row, and $\CMcal{Q}(0,c_{s+1},c_s)\ne \CMcal{Q}(b_s,c_{s+1},c_s)$. 
    
    Suppose $\Omega_s\in\B$. Then, $0$ and $b_s$ must be switched by $\phi_{n-s}$ because the first two rows of $\alpha$ between columns $n-s$ and $n-s+1$ form exactly one descent block $\Omega_s$ and $\varepsilon(\Omega_s)$ is applied according to the rules of $\phi_{n-s}$. This excludes the possibility that $\Omega_s\in\B$ and we are led to $\Omega_s\in\mathcal{C}$. Since $\Omega_s\in\mathcal{C}$, Remark \ref{Rem:1} indicates that $\CMcal{Q}(0,c_{s+1},c_s)\ne \CMcal{Q}(b_s,c_{s+1},c_s)$. As a result, block $\Omega_s$ belongs to one component, thus implying that $\phi_{n-s}$ must also fix the entries $c_{s+1}$ and $c_s$ if it fixes the entries $0$ and $b_s$. That is, $\phi_{n-s}(\Omega_s)=\Omega_s$.

    Additionally, since $\Omega_s\in\mathcal{C}$, we have $c_s<b_s\le c_{s+1}$. In step (1), we continue by applying $t_{n-s}$ to $\phi_{n-s}(\alpha)$, which, by Lemma \ref{lem3.3}, gives
    \begin{align*}
    \quinv(t_{n-s}\circ\phi_{n-s}(\alpha))&=\quinv(\alpha)+s-1+x_{s}-x_{s+1},\\
    {N}\des(t_{n-s}\circ\phi_{n-s}(\alpha))&=(\ldots,x_s,x_{s+1}+1,\ldots),\\
    \maj(t_{n-s}\circ\phi_{n-s}(\alpha))&=\maj(\alpha)-1.
    \end{align*}

	Case (III): For the case when $b_s\leq c_s$, and $0$ and $b_s$ are not interchanged by $\phi_{n-s}$, let $\beta$ denote the topmost descent block between columns $n-s$ and $n-s+1$ of $\alpha$. We write $\varepsilon(\beta)=\rho_{n-s}^{\kappa}$ and note that all blocks above row $\kappa$ must be neutral.
	
	We distinguish the cases $\kappa=p+1$ and $\kappa<p+1$. Following the notations in the proof of Theorem \ref{thm3.4}, if $\kappa=p+1$, then the set $\T_1$ consists of an equal number of left- and right-descent blocks, since $0$ and $b_s$ are not swapped. Otherwise, $\kappa<p+1$, and we apply $\rho_{n-s}$ to $\phi_{n-s}(\alpha)$, which switches $0$ and $b_s$ without crossing row $\kappa$ by the definition of $\varepsilon$. This yields $\T_1=\varnothing$. It follows that the numbers of non-descents in columns $n-s$ and $n-s+1$ of $\phi_{n-s}(\alpha)$ are invariant under $\rho_{n-s}$ in both cases. By Lemma \ref{L:2} (where $\CMcal{Q}(0,0,b_s)=1$), (\ref{E:majthm62}), (\ref{E:quinvthm62}) and (\ref{E:ndesthm62}), we find
	\begin{align*}
		\quinv(\rho_{n-s}\circ\phi_{n-s}(\alpha))&=\quinv(\alpha)-1+x_{s}-x_{s+1},\\
		{N}\des(\rho_{n-s}\circ\phi_{n-s}(\alpha))&=(\ldots,x_s,x_{s+1},\ldots),\\
		\maj(\rho_{n-s}\circ\phi_{n-s}(\alpha))&=\maj(\alpha).
	\end{align*}

	Case (IV): We continue with the notations in Case (III) and in the proof of Theorem \ref{thm3.4}.
	For the case $b_s\leq c_s$, and $0$ and $b_s$ are swapped by $\phi_{n-s}$, we have $\kappa=p+1$ and $\vert \T_1\vert$ is odd. Observe that $\beta\not\in \A$ because otherwise the starting row of $\varepsilon(\beta)$ would not be the top row. 
	Since all blocks above $\beta$ are neutral and $\varepsilon(\beta)=\rho_{n-s}$ with $\beta\not\in \A$, Lemma \ref{L:lemma10} $(1)$ implies that $\beta$ is a right-descent block and $c_{s+1}<c_s$. By the definition of $\varepsilon$, we then have $\CMcal{Q}(b_s,c_{s+1},c_s)=0$ as a result of $\CMcal{Q}(0,c_{s+1},c_s)=1$ and $\CMcal{Q}(b_s,c_{s+1},c_s)\ne \CMcal{Q}(0,c_{s+1},c_s)$. It follows that $c_{s+1}<b_s\le c_s$ and hence both rows of $\Omega_s$ must be swapped by $\rho_{n-s}$. That is, $\phi_{n-s}(\Omega_s)=\rev(\Omega_s)$.
	Because $\CMcal{Q}(0,b_s,0)=0$, Lemma \ref{L:2} and (\ref{E:majthm62})--(\ref{E:ndesthm62}) show that 
		\begin{align*}
		\quinv(\rho_{n-s}\circ\phi_{n-s}(\alpha))&=\quinv(\alpha)+1+x_{s}-x_{s+1},\\
			{N}\des(\phi_{n-s}(\alpha))&=(\ldots,x_s,x_{s+1},\ldots),\\
		\maj(\rho_{n-s}\circ\phi_{n-s}(\alpha))&=\maj(\alpha).
	\end{align*}
	We now describe how the numbers of non-descents change from $\phi_{n-s}(\alpha)$ to $\rho_{n-s}\circ\phi_{n-s}(\alpha)$. 
	Since $\phi_{n-s}$ includes the factor $\rho_{n-s}$ and $\beta$ is a right-descent block, the sequence of descent blocks in $\T_1$ is $(\R,\CMcal{L},\R,\ldots)$ which contains an odd number of blocks and becomes $(\CMcal{L},\R,\CMcal{L},\ldots)$ after applying $\phi_{n-s}$ to $\alpha$.
	In consequence, the number of non-descents in column $n-s$ increases by one, whereas the number in column $n-s+1$ decreases by one after $\rho_{n-s}$ has been applied to $\phi_{n-s}(\alpha)$. That is, 
		\begin{align*}
		{N}\des(\rho_{n-s}\circ\phi_{n-s}(\alpha))&=(\ldots,x_s+1,x_{s+1}-1,\ldots).
	\end{align*}
	We perform $t_{n-s}$ on $\rho_{n-s}\circ\phi_{n-s}(\alpha)$ according to step $(2)$. Considering $c_{s+1}<b_s\le c_s$, the neutral block $\Omega_s$ is transformed into a left-descent block by $t_{n-s}$. Hence, we conclude that
	\begin{align*}
	\quinv(t_{n-s}\circ \rho_{n-s}\circ\phi_{n-s}(\alpha))&=\quinv(\alpha)+1-(s+1)+x_{s}-x_{s+1},\\
	&=\quinv(\alpha)-1-(s-1)+x_{s}-x_{s+1},\\
	{N}\des(t_{n-s}\circ \rho_{n-s}\circ\phi_{n-s}(\alpha))&=(\ldots,x_s,x_{s+1}-1,\ldots),\\
	\maj(t_{n-s}\circ \rho_{n-s}\circ\phi_{n-s}(\alpha))&=\maj(\alpha)+1
	\end{align*}
	by Lemma \ref{lem3.3}.
	
	Let $\alpha^i$ be the filling obtained after the rightmost zero of $\alpha$ is moved from column $n-s$ to column $n-s+i$ for $1\le i\le s$ and set $\alpha^0=\alpha$. We now calculate the changes in $\quinv$, ${N}\des$ and $\maj$ from $\alpha$ to $\alpha^1$ for Cases (I)--(IV) in a unified manner.
	
	The condition for Case (II) is equivalent to $c_s<b_s\le c_{s+1}$ and $\phi_{n-s}(\Omega_s)=\Omega_s$, and that for Case (IV) is tantamount to $c_{s+1}<b_s\le c_{s}$ and $\phi_{n-s}(\Omega_s)=\rev(\Omega_s)$. Define $y_s=\chi(c_s<b_s\le c_{s+1}\,\,\textrm{and}\,\,\phi_{n-s}(\Omega_s)=\Omega_s)-\chi(c_{s+1}<b_s\le c_{s}\,\,\textrm{and}\,\,\phi_{n-s}(\Omega_s)=\rev(\Omega_s))$. Then, 
	\begin{align*}
	\quinv(\alpha^1)-\quinv(\alpha)&=x_s-x_{s+1}+(s-1)y_s-\chi(b_s\le c_s),\\
	{N}\des(\alpha^1)&=(x_n,\ldots,x_{s+2},x_s,x_{s+1}+y_s,x_{s-1},\ldots,x_1,\ldots),\\
	\maj(\alpha^1)-\maj(\alpha)&=-y_s.
	\end{align*}
	We then update the notations by denoting the second top row of $\alpha^1$ by $\rev(c)=(c_n,\ldots,c_1)$, and we define the block $\Omega_{s-1}$ as before. We repeat this process on $\alpha^1$ to produce $\alpha^{2}$, obtaining
	\begin{align*}
		\quinv(\alpha^2)-\quinv(\alpha^1)&=x_{s-1}-(x_{s+1}+y_s)+(s-2)y_{s-1}-\chi(b_{s-1}\le c_{s-1}),\\
		{N}\des(\alpha^2)&=(x_n,\ldots,x_{s+2},x_s,x_{s-1},x_{s+1}+y_s+y_{s-1},\ldots),\\
		\maj(\alpha^2)-\maj(\alpha^1)&=-y_{s-1}.
	\end{align*}
	This further leads to 
		\begin{align*}
		\quinv(\alpha^2)-\quinv(\alpha)&=\sum_{i=s-1}^{s}(x_i-x_{s+1}-\chi(b_i\le c_i))+(s-2)\sum_{i=s-1}^{s}y_i,\\
		\maj(\alpha^2)-\maj(\alpha)&=-y_{s-1}-y_s.
	\end{align*}
  In general, for $2\le j\le s$, we find that 
	\begin{align}\label{eq2}
	\quinv(\alpha^j)-\quinv(\alpha)
	&=\sum_{i=s-j+1}^{s}(x_i-x_{s+1}-\chi(b_i\le c_i))+(s-j)\sum_{i=s-j+1}^{s}y_i.
	\end{align}
	Taking $j=s$ so that the last sum vanishes, we arrive at
	\begin{align}\label{E:eq2}
	\quinv(\alpha^s)-\quinv(\alpha)=\sum_{i=1}^{s}(x_i-\chi(b_i\le c_i))-sx_{s+1}=\sum_{i=1}^{s}x_i-sx_{s+1}-m.
	\end{align}
	Let $z_1=\sum_{i=1}^{s}y_i$. Then,
	\begin{align*}
	{N}\des(\alpha^s)&=(x_n,\ldots,x_{s+2},x_s,\ldots,x_1,x_{s+1}+z_1,\ldots),\\
	\maj(\alpha^s)-\maj(\alpha)&=-z_1.
	\end{align*}
	The topmost row of $\alpha^s$ is $(0,\ldots,0,b_s,\ldots,b_1,0)$.  Let the second-topmost row of $\alpha^s$ be $\rev(d)=(d_{n-1},\ldots,d_{s+1},d_s,\ldots,d_1,d_0)$. We proceed by moving the second-rightmost zero of $\alpha^s$ to the second-rightmost column via steps (1) and (2). Letting $\alpha^{2s}$ be the resulting filling, we deduce a formula analogous to (\ref{eq2}) and (\ref{E:eq2}). First, we have
	\begin{align*}
	\sum_{i=1}^{s}\chi(b_i\le d_i)=m+z_1,
	\end{align*}
	resulting from the fact that the number of non-descents between the topmost two rows of $\alpha$ is increased (resp. decreased) by one if and only if Case (II) (resp. Case (IV)) is true; that is when $y_i=1$ (resp. $y_i=-1$).
	
	Define ${\Gamma_s=\ytableausetup
		{mathmode,boxframe=normal,boxsize=1.65em}
		\raisebox{3pt}{\begin{ytableau}[] 0&b_s \\  \scriptstyle d_{s+1} & d_{s} \end{ytableau}}}$
	and set $y_s'=\chi(d_s<b_s\le d_{s+1}\,\,\textrm{and}\,\,\phi_{n-s-1}(\Gamma_s)=\Gamma_s)-\chi(d_{s+1}<b_s\le d_s\,\,\textrm{and}\,\,\phi_{n-s-1}(\Gamma_s)=\rev(\Gamma_s))$. Then, we apply steps (1) and (2) to switch $b_s$ and $0$. Subsequently, we update the notations by letting the second-top row be $\rev(d)$, and derive the block $\Gamma_{s-1}$ as well as the integer $y'_{s-1}$. We continue with this process to define $z_2=\sum_{i=1}^{s}y'_i$. As a result,
	\begin{align*}
	\quinv(\alpha^{2s})-\quinv(\alpha^s)&=\sum_{i=1}^{s}(x_i-\chi(b_i\le d_i))-sx_{s+2}+z_2,\\
	&=\sum_{i=1}^{s}x_i-sx_{s+2}-m-z_1+z_2.
	\end{align*}
	In addition, we have
	\begin{align*}
	{N}\des(\alpha^{2s})
	&=(x_n,\ldots,x_{s+3},x_s,\ldots,x_1,x_{s+2}+z_2,x_{s+1}+z_1,\ldots),\\
	\maj(\alpha^{2s})-\maj(\alpha)&=-z_1-z_2.
	\end{align*}
    Generally, for $1\le i\le n-s$, let $\alpha^{is}$ denote the filling obtained after the $i$th rightmost zero of $\alpha^{(i-1)s}$ is moved to the $i$th rightmost column. Let $(u_{s+1},u_s,\ldots,u_1)$ be the sequence of entries directly below $(0,b_s,\ldots,b_1)$ in the top row of $\alpha^{(i-1)s}$.
    
    Define ${\Theta_s=\begin{ytableau}[] 0&b_s \\  \scriptstyle u_{s+1} & u_{s} \end{ytableau}}$ 
    and set $z_{i}=\sum_{j=1}^{s}\chi(u_j<b_j\le u_{j+1}\,\,\textrm{and}\,\,\phi_{n-s-i+1}(\Theta_s)=\Theta_s)-\chi(u_{j+1}<b_j\le u_j\,\,\textrm{and}\,\,\phi_{n-s-i+1}(\Theta_s)=\rev(\Theta_s))$. Then,
    \begin{align}\nonumber
    \sum_{j=1}^{s}\chi(b_j\le u_j)&=m+\sum_{j=1}^{i-1}z_j.\\
	\label{E:quinvf}\quinv(\alpha^{is})-\quinv(\alpha^{(i-1)s})&=\sum_{j=1}^{s}x_j-sx_{s+i}-m-\sum_{j=1}^{i-1}z_j+z_i(i-1).
	\end{align}
	Furthermore, the changes in ${N}\des$ and $\maj$ are given by
	\begin{align}\nonumber
	{N}\des(\alpha^{is})&=(x_n,\ldots,x_{s+i+1},x_s,\ldots,x_1,x_{s+i}+z_i,\ldots,x_{s+1}+z_1,\ldots),\\
	\label{E:maj11}
	\maj(\alpha^{is})-\maj(\alpha)&=-\sum_{j=1}^{i}z_j.
	\end{align}
	Take $i=n-s$. Let $\alpha^{(n-s)s}$ be the filling after all the zeros of $\alpha$ are shifted to the end of the top row. Since $\gamma(\sigma)$ is obtained from $\alpha^{(n-s)s}$ by removing all zeros, we have $\quinv(\gamma(\sigma))=\quinv(\alpha^{(n-s)s})$
	and it follows from (\ref{E:quinvf}) that
	\begin{align}
	\nonumber\quinv(\gamma(\sigma))-\quinv(\alpha)&=\quinv(\alpha^{(n-s)s})-\quinv(\alpha)\\
     \nonumber&=\sum_{i=1}^{n-s}\left(\sum_{j=1}^{s}x_j-sx_{s+i}-m-\sum_{j=1}^{i-1}z_j+z_i(i-1)\right)\\
     \label{E:quinv3}&=(n-s)\sum_{i=1}^{s}x_i-s\sum_{i=s+1}^{n}x_{i}-(n-s)m-\sum_{i=1}^{n-s}z_i(n-s-2i+1).
	\end{align}
	In contrast, removing the zeros of $\alpha^{(n-s)s}$ reduces the major index and number of non-descents in columns $s+1$ through $n$. That is,
	\begin{align}
	\nonumber{N}\des(\gamma(\sigma))&=(x_s,\ldots,x_1,x_{n}+z_{n-s}-1,\ldots,x_{s+1}+z_1-1,\ldots),\\
	\label{E:maj12}\maj(\gamma(\sigma))-\maj(\alpha^{(n-s)s})&=-\sum_{i=1}^{n-s} (p-1-(x_{s+i}-1+z_i)).
	\end{align}
	Specializing $i=n-s$ in \eqref{E:maj11}, and combining with \eqref{E:maj12} and \eqref{eq1}, we find that
	\begin{align*}
	\maj(\gamma(\sigma))=\maj(\alpha)-\sum_{i=1}^{n-s} (p-x_{s+i})=\maj(\sigma).
	\end{align*}
    The change in $\quinv$ remains to be discussed. Recall that (\ref{E:nu1}) yields
	\begin{align*}
	\inv(\sigma)&=\inv(\sigma|_p^{p+1})+\inv(\tau)-\inv(c), \\
	\quinv(\alpha)&=\quinv(\alpha|_p^{p+1})+\quinv(\gamma(\tau))-\inv(c).
	\end{align*}
	Substituting these two expressions into \eqref{eq7} and \eqref{E:quinv3} produces
	\begin{align*}
	\nonumber
	\quinv(\gamma(\sigma))&= \inv(\sigma|_{p}^{p+1})+\sum_{i=1}^{s}\chi(b_i\leq
	c_i)(2i-n-1)+\sum_{i=s+1}^{n}(2i-n-1) \\
	\nonumber&\quad\quad+\inv(\tau)+\kappa(\gamma(\tau))-\inv(c)+(n-s)\sum_{i=1}^{s}x_i-s\sum_{i=s+1}^{n}x_{i} \\
	&\quad\quad-\sum_{i=1}^{n-s}z_i(n-s-2i+1)  \\
	&=\inv(\sigma)+\kappa(\gamma(\sigma)).
	\end{align*}
	The last equality holds because of the following consequence of Lemma \ref{L:1}. Let $\sigma=\sigma_1 \sqcup \cdots \sqcup \sigma_q\in \T(\mu)$, where $q$ is the number of distinct parts of $\mu'$. Then,
	\begin{align*}
	\kappa(\gamma(\tau))&=\sum_{i=1}^{s}(x_i-\chi(b_i\leq c_i))(2i-n-1)+\sum_{i=s+1}^{n}(x_i-1)(2i-n-1)\\
	&\quad\quad+\sum_{i=2}^{q}(\quinv(\gamma(\tau)_i)-\inv(\rev(\gamma(\tau)_i)),\\
	\kappa(\gamma(\sigma))&=\sum_{i=1}^{s}x_i(2i-s-1)+\sum_{i=1}^{n-s}(x_{i+s}+z_i-1)(2i-(n-s)-1) \\
	&\quad\quad+\sum_{i=2}^{q}(\quinv(\gamma(\tau)_i)-\inv(\rev(\gamma(\tau)_i)) \\
	&=\sum_{i=1}^{s}x_i(2i-s-1)+\sum_{i=1}^{n-s}(x_{i+s}+z_i)(2i-n+s-1)\\
	&\quad\quad+\sum_{i=2}^{q}(\quinv(\gamma(\tau)_i)
	-\inv(\rev(\gamma(\tau)_i)).
	\end{align*}
	When $s=n$, it is immediate that $\alpha=\gamma(\sigma)$, $\maj(\alpha)=\maj(\sigma)$ and ${N}\des(\sigma_1)={N}\des(\rev(\alpha_1))$, where $\alpha=\alpha_1 \sqcup \cdots \sqcup \alpha_q$, and $\quinv(\alpha)=\inv(\sigma)+\kappa(\alpha)$ follows from the above argument. This completes the inductive proof.
\qed

\begin{example}\label{Exam:3}
	Let $\mu=(7,7,5,5,5,2)$ and let $\sigma\in \T(\mu)$ be defined as below.
	\begin{align*}
	\sigma=\begin{ytableau}[] 5&4\\ 9 & 3& 6& 1& 3\\ 2 & 5 & 9& 4& 8\\ 3 & 9 & 7&  3& 5  \\ 5 & 8 & 4&  6& 4 & 8&  7  \\ 9 &3 & 6  &5 & 2& 10  & 1
	\end{ytableau}
	\end{align*}

	We construct $\gamma(\sigma)$ and $\varphi(\sigma)$ step by step according to Theorems \ref{pro1} and \ref{thm8}. First, we define $\gamma(\sigma)$ inductively from bottom to top, as stated in the proof of Theorem \ref{pro1}. 
	
	Starting with the filling $\sigma\vert_1^1$ of the bottom row, we have $\gamma(\sigma\vert_1^1)=\rev(\sigma\vert_1^1)$. Since both $\sigma\vert_1^1$ and $\sigma\vert_2^2$ are of equal length (this is the case when $s=n=7$), we obtain $\gamma(\sigma\vert_1^2)=\rev(\sigma\vert_1^2)$. Next, we consider the filling $\sigma\vert_3^3$ that contains fewer boxes than $\sigma\vert_2^2$ (this is the case when $s=5<n=7$). Let $\alpha$ be the filling obtained by placing the sequence $(0,0,5,3,7,9,3)$ on top of $\rev(\sigma\vert_1^2)$.
	Then, we move the two zeros of $\alpha$ to the end, according to steps (1) and (2), as follows, where each time the two columns under consideration are highlighted in green.	
	\begin{align*}
	\alpha=\,\,&\begin{ytableau}[] 0& *(green)0 & *(green)5 & 3 & 7& 9& 3\\ 7&*(green)8 &*(green)4 & 6 & 4& 8& 5\\ 1& *(green)10 &*(green)2 & 5 & 6& 3& 9 \end{ytableau}
	\rightarrow \begin{ytableau}[] 0&5 &*(green)0 &*(green) 3 & 7& 9& 3\\ 7&4 &*(green)8 & *(green)6 & 4& 8& 5\\ 1& 2 &*(green)10 & *(green)5 & 6& 3& 9 \end{ytableau}
	\rightarrow \begin{ytableau}[] 0&5 &3 &*(green) 0 & *(green)7& 9& 3\\ 7&4 &8 & *(green)6 & *(green)4& 8& 5\\ 1& 2 &5 & *(green)10 &*(green) 6& 3& 9 \end{ytableau} \\
	&\rightarrow \begin{ytableau}[] 0&5 &3 & 7 & *(green)0&*(green) 9& 3\\ 7&4 &8 & 6 & *(green)4&*(green) 8& 5\\ 1& 2 &5 & 10 &*(green) 6& *(green)3& 9 \end{ytableau}
	\rightarrow \begin{ytableau}[] 0&5 &3 & 7 & 9&*(green) 0&*(green) 3\\ 7&4 &8 & 6 & 8&*(green) 4&*(green) 5\\ 1& 2 &5 & 10 & 3& *(green)6& *(green)9 \end{ytableau}
	\rightarrow \begin{ytableau}[] *(green)0&*(green)5 &3 & 7 & 9& 3& 0\\ *(green)7&*(green)4 &8 & 6 & 8& 4& 5\\*(green) 1&*(green) 2 &5 & 10 & 3& 6& 9 \end{ytableau}\\
	&\rightarrow \begin{ytableau}[] 5&*(green)0 &*(green)3 & 7 & 9& 3& 0\\ 4&*(green)7 &*(green)8 & 6 & 8& 4& 5\\ 1&*(green) 2 &*(green)5 & 10 & 3& 6& 9 \end{ytableau}
	\rightarrow \begin{ytableau}[] 5&3 &*(green)0 &*(green) 7 & 9& 3& 0\\ 4&7 &*(green)8 &*(green) 6 & 8& 4& 5\\ 1& 2 &*(green)5 & *(green)10 & 3& 6& 9 \end{ytableau}
	\rightarrow \begin{ytableau}[] 5&3 &7 &*(green) 0 & *(green)9& 3& 0\\ 4&7 &6 &*(green) 8 & *(green)8& 4& 5\\ 1& 2 &10 & *(green)5 &*(green) 3& 6& 9 \end{ytableau}\\
	&\rightarrow \begin{ytableau}[] 5&3 &7 & 9 & *(green)0&*(green) 3& 0\\ 4&7 &6 & 8 & *(green)8&*(green) 4& 5\\ 1& 2 &10 & 5 &*(green) 3& *(green)6& 9 \end{ytableau}
	\rightarrow \begin{ytableau}[] 5&3 &7 & 9 & 3& 0& 0\\ 4&7 &6 & 8 & 4& 8& 5\\ 1& 2 &10 & 5 & 6& 3& 9 \end{ytableau}
	\end{align*}
	We remove the last two zeros and add the sequences $(8,4,9,5,2)$, $(3,1,6,3,9)$ and $(0,0,0,4,5)$ successively to the top, yielding the filling below.
	\begin{align*}
	\begin{ytableau}[]0&0 &0 & 4 & 5\\  3& 1& 6& 3 &9\\ 8&4&9&5&2   \\ 5&3 &7 & 9 & 3\\ 4&7 &6 & 8 & 4& 8& 5\\ 1& 2 &10 & 5 & 6& 3& 9 \end{ytableau}
	\end{align*}
	We transport the rightmost zero from column $3$ to column $5$.
	\begin{align*}
	\begin{ytableau}[]*(green)0 &*(green) 4&5 \\*(green)  6&*(green)3&9\\ *(green)9&*(green)5&2   \\ *(green)7 & *(green)9&3\\*(green) 6 & *(green)8&4\\*(green) 10 &*(green) 5&6 \end{ytableau}\rightarrow \begin{ytableau}[]4 &*(green) 0&*(green)5 \\  6&*(green)3&*(green)9\\ 9&*(green)5&*(green)2   \\ 7&*(green)9&*(green)3\\ 6 &*(green) 8&*(green)4\\ 5 &*(green) 10&*(green)6 \end{ytableau}
	\rightarrow \begin{ytableau}[]4 & 5&0 \\  6&9&3\\ 9&2&5   \\ 7&3&9\\ 6 & 8&4\\ 5 & 10&6 \end{ytableau}
	\end{align*}
	Then, we move the zero from column $2$ to column $4$.
	\begin{align*}
	\begin{ytableau}[]*(green)0 & *(green)4&5 \\ *(green) 1&*(green)6&9\\ *(green)4&*(green)9&2   \\ *(green) 3&*(green)7&3\\ *(green)7 & *(green)6&8\\*(green) 2 &*(green)5&10 \end{ytableau}\rightarrow \begin{ytableau}[]4 &*(green)0&*(green)5 \\ 1&*(green)6&*(green)9\\ 4&*(green)9&*(green)2   \\ 3&*(green)7&*(green)3\\ 7&*(green)6&*(green)8\\ 2&*(green)5&*(green)10 \end{ytableau}
	\rightarrow \begin{ytableau}[]4 &5&0 \\ 1&6&9\\ 4&9&2   \\ 3&3&7\\ 7&8&6\\ 2&10&5 \end{ytableau}
	\end{align*}
	Finally, we move the zero from column $1$ to column $3$.
	\begin{align*}
	\begin{ytableau}[]*(green)0 & *(green)4&5 \\*(green)3&*(green)1&6\\ *(green)8&*(green)4&9   \\ *(green) 5&*(green)3&3\\ *(green)4 &*(green) 7&8\\ *(green)1 &*(green)2&10 \end{ytableau}\rightarrow \begin{ytableau}[]4 & *(green)0&*(green)5 \\  3&*(green)1&*(green)6\\ 4&*(green)8 &*(green)9  \\ 3&*(green)5&*(green)3\\ 7&*(green)4&*(green)8\\ 1&*(green)2&*(green)10 \end{ytableau}
	\rightarrow \begin{ytableau}[]4 & 5&0 \\  3&6&1\\ 4&8 &9  \\ 3&3&5\\ 7&8&4\\ 1&10&2 \end{ytableau}
	\end{align*}
	We remove these zeros to generate $\gamma(\sigma)$.
	\begin{align*}
	\gamma(\sigma)=\begin{ytableau}[]4 & 5\\  3& 6& 1& 9 &3\\ 4&8&9&2&5   \\ 3&3 &5 & 7 & 9\\ 7&8 &4 & 6 & 4& 8& 5\\ 1& 10 &2 & 5 & 6& 3& 9 \end{ytableau}
	\end{align*}
	Second, we construct $\varphi(\sigma)$ by applying $\theta$ to each rectangle of $\gamma(\sigma)$ independently, where $\theta$ is defined in the proof of Theorem \ref{thm8}.
	\begin{align*}
	\begin{ytableau}[]4 & 5\\  3& 6\\ 4&8   \\ 3&3 \\ 7&8 \\ 1& 10  \end{ytableau}\rightarrow\begin{ytableau}[]4 & 5\\  6& 3\\ 8&4   \\ 3&3 \\ 7&8 \\ 10& 1 \end{ytableau}
	\quad \begin{ytableau}[]1&*(green)9 &*(green) 3\\ 9&*(green) 2&*(green) 5\\5&*(green) 7&*(green)9   \\4&*(green) 6&*(green)4 \\2&*(green)5&*(green)6 \end{ytableau}
	\rightarrow\begin{ytableau}[]*(green)1&*(green)3 & 9\\*(green) 9&*(green)5& 2\\*(green)5&*(green)7&9   \\*(green)4&*(green) 4&6 \\*(green)2&*(green)6&5 \end{ytableau}
	\rightarrow\begin{ytableau}[]1&*(green)3 &*(green) 9\\ 5&*(green)9& *(green)2\\5&*(green)7&*(green)9   \\4& *(green)4&*(green)6 \\6&*(green)2&*(green)5 \end{ytableau}
	\rightarrow\begin{ytableau}[]1&3 & 9\\ 5&9& 2\\5&7&9   \\4& 4&6 \\6&2&5 \end{ytableau}
	\quad\begin{ytableau}[]8&5 \\ 3&9\end{ytableau}\rightarrow\begin{ytableau}[]8&5 \\ 9&3\end{ytableau}
	\end{align*}
	Assembling these three rectangles gives $\varphi(\sigma)$, as shown below.
	\begin{align*}
	\varphi(\sigma)=\begin{ytableau}[]4 & 5\\  6& 3& 1& 3 &9\\ 8&4&5&9&2   \\ 3&3 &5 & 7 & 9\\ 7&8 &4 & 4 & 6& 8& 5\\ 10& 1 &6 & 2 & 5& 9& 3 \end{ytableau}
	\end{align*}
	We can calculate that $(\inv,\maj)(\sigma)=(\quinv,\maj)(\varphi(\sigma))=(40,33)$, while $\quinv(\sigma)\ne \inv(\varphi(\sigma))$ as $\quinv(\sigma)=32$ and $\inv(\varphi(\sigma))=34$. 
\end{example}

\section*{Acknowledgements}
 Both authors would like to thank the referees for their valuable suggestions and insightful comments. A short version of this manuscript was presented at the conference of FPSAC (Formal Power Series and Algebraic Combinatorics) in July 2025. 

The first author was supported by the Austrian Research Fund FWF Elise-Richter Project V 898-N, and is supported by the Fundamental Research Funds for the Central Universities, Project No. 20720220039. Both authors are supported by the National Nature Science Foundation of China (NSFC), Projects No. 12201529 and No. 12571358.


\end{document}